\documentclass[11pt,reqno]{amsart}

\usepackage[T1]{fontenc}
\usepackage{lmodern}
\usepackage{amsmath,amssymb,mathtools,mathrsfs}
\usepackage{microtype}
\usepackage[colorlinks=true,linkcolor=blue,citecolor=blue,urlcolor=blue]{hyperref}
\hypersetup{pdftitle={Topological components of surface group representations into the unitary group},pdfauthor={Inkang Kim, Pierre Pansu, Xueyuan Wan}}
\providecommand{\doi}[1]{\href{https://doi.org/#1}{doi:~\nolinkurl{#1}}}

\newtheorem{theorem}{Theorem}[section]
\newtheorem{proposition}[theorem]{Proposition}
\newtheorem{lemma}[theorem]{Lemma}
\newtheorem{corollary}[theorem]{Corollary}
\theoremstyle{definition}

\theoremstyle{remark}
\newtheorem{remark}[theorem]{Remark}
\numberwithin{equation}{section}
\DeclareMathOperator{\U}{U}
\DeclareMathOperator{\SU}{SU}
\DeclareMathOperator{\Herm}{Herm}
\DeclareMathOperator{\tr}{tr}

\DeclareMathOperator{\Hom}{Hom}
\DeclareMathOperator{\sgn}{sgn}
\DeclareMathOperator{\sign}{sign}
\DeclareMathOperator{\Gr}{Gr}
\DeclareMathOperator{\diag}{diag}
\DeclareMathOperator{\im}{Im}

\newcommand{\C}{\mathbb C}
\newcommand{\Z}{\mathbb Z}
\newcommand{\E}{\mathcal E}
\newcommand{\HH}{\mathcal H}
\newcommand{\Srep}{\mathcal S}
\newcommand{\X}{\mathcal X}
\newcommand{\B}{\mathcal B}
\newcommand{\Id}{I_p}
\newcommand{\eps}{\varepsilon}

\title[Topological components of surface group representations]{Topological components of surface group representations into the unitary group}

\author{Xueyuan Wan}
\address{Mathematical Science Research Center, Chongqing University of Technology,
Chongqing 400054, China}
\email{xwan@cqut.edu.cn}

\date{}

\makeatletter
\@namedef{subjclassname@2020}{\textup{2020} Mathematics Subject Classification}
\makeatother
\subjclass[2020]{Primary 57M50; Secondary 22E40, 53D30}
\keywords{Surface group representation, unitary group, connected component, rho invariant, signature, Cayley transform}
\thanks{
Xueyuan Wan is supported by the
National Key R\&D Program of China (Grant No. 2024YFA1013200) and the National Natural Science Foundation of China (Grant No. 12671100).}

\begin{document}

\begin{abstract}
We study representations of compact oriented surface groups into
$\mathrm U(p)$ with elliptic-unipotent boundary holonomies, meaning
that no boundary holonomy has eigenvalue $1$.
We prove that the signature of the associated flat Hermitian bundle
completely determines the connected component, and that every
component is path-connected. For genus $g\geq1$ and $n\geq1$ boundary
components, there are $np-1$ components; for $g=0$ and $n\geq2$,
there are $(n-2)p+1$. The disk case is empty, while the
closed-surface representation spaces are connected. The same
component classification holds after taking the quotient by
conjugation. Our proofs use the boundary rho invariant and
explicit matrix deformations. For a three-holed sphere, the
representation components have the homotopy types of complex
Grassmannians, and their conjugation quotients are contractible.
\end{abstract}

\maketitle

\section{Introduction}

The topology of surface group representation spaces is a central subject
in geometry and topology. These spaces connect flat bundles, moduli of
holomorphic bundles, and geometric structures on surfaces. Two basic
questions are to determine their connected components and to understand
the topology within each component. Classical results show how numerical
invariants and geometric structures can answer these questions. After
choosing a complex structure on a closed surface, the
Narasimhan--Seshadri correspondence relates irreducible unitary
representations, up to conjugacy, to stable holomorphic bundles of degree
zero \cite{NS1965}, and Atiyah--Bott developed a gauge-theoretic approach
to the topology of the corresponding moduli spaces through the
Yang--Mills functional \cite{AtiyahBott}. For a closed oriented surface
of genus $g\geq2$, Goldman proved that the Euler number completely
determines the connected component of a representation into
$\mathrm{PSL}(2,\mathbb R)$, giving $4g-3$ components; the two extremal
components, after taking the conjugation quotient, are copies of
Teichm\"uller space \cite[Theorem~B and Corollary~C]{Goldman1988}.

For surfaces with boundary, the choice of boundary conditions becomes
part of the topology of the problem. Goldman also classified components
for $\mathrm{PSL}(2,\mathbb R)$ with hyperbolic boundary holonomies
\cite[Theorem~D]{Goldman1988}. More recently, Kim--Wan \cite{KW2025}
used the signature formula to study representations into
$\mathrm{SL}(2,\mathbb R)$ and $\mathrm{PSL}(2,\mathbb R)$, obtaining
component counts for the loci defined by elliptic, hyperbolic, or
parabolic boundary conditions. Their results demonstrate the usefulness
of the signature and its boundary correction terms in questions about
deformations of surface group representations.

The compact unitary group provides a natural setting in which to examine
the role of the boundary. It is fundamental to the theory of flat
Hermitian bundles and also occurs as a factor of the maximal compact
subgroup of an indefinite unitary group. When the surface has nonempty
boundary, its fundamental group is free, and the unrestricted
$\U(p)$-representation space is a product of copies of $\U(p)$, hence
connected. Nevertheless, imposing an open condition on the boundary
holonomies can produce several components with different topology.
We study representations with \emph{elliptic-unipotent boundary
holonomies} in the sense of \cite[Definition~4.5]{KPW2022}.
Since every eigenvalue of a unitary matrix has modulus one, this
condition means precisely that no boundary holonomy has eigenvalue $1$.
We allow the remaining eigenvalues and the boundary conjugacy classes
to vary.

This boundary condition has both a topological and a spectral meaning.
The $1$-eigenspace of a boundary holonomy consists of the parallel
sections along that boundary circle. Its vanishing makes the local
system acyclic on the boundary and gives a nondegenerate twisted
intersection form on the whole first cohomology, of constant dimension.
At the same time, excluding $1$ permits a continuous choice of
eigenangles in $(0,2\pi)$ and removes the jumps of the boundary
rho invariant at that spectral cut. The signature is defined more
generally, but this open locus is a natural setting in which it becomes
a deformation invariant even as the boundary conjugacy classes vary.

The possible values of the signature are already known.  Kim--Pansu--Wan \cite{KPW2026}
determine, among other cases, all signatures of representations into
$\U(p)$, allowing boundary holonomies with eigenvalue $1$.
For genus zero and $n\geq2$ boundary components, the range is
$[-p(n-2),p(n-2)]\cap\mathbb Z$; for positive genus and $n\geq1$,
it is $([2-np,np-2]\cap\mathbb Z)\cup\{0\}$; see
\cite[Theorem~2 and Section~10.2]{KPW2026}.
Their recent preprint \cite[Theorem~1.1]{KPWUnitary2026} completes the
determination of possible signatures for general $\U(p,q)$, including
the previously unresolved unbalanced case in positive genus with
nonempty boundary. These numerical results motivate a more precise
topological question: for the compact unitary group, does a signature
value determine a single connected component on the locus where the
boundary holonomies are elliptic-unipotent, and what is the topology of that component?
The present paper answers the component question and gives explicit
deformations and homotopy descriptions in the matrix spaces underlying
the genus-zero case.

\subsection{The signature viewpoint}

Our starting point is the signature formula of Kim--Pansu--Wan
\cite[Theorem~1]{KPW2022}. For a representation
$\phi:\pi_1(\Sigma)\to\U(p,q)$ of a surface with boundary, it reads
\[
 \sign(\phi)=-2T(\Sigma,\phi)
       +\sum_{c\subset\partial\Sigma}\rho\bigl(\phi(c)\bigr),
\]
where $T$ is the Toledo invariant and $\rho$ is a conjugation-invariant
function of the boundary holonomy. For representations into the compact
group $\U(p)$, the Toledo term vanishes
\cite[Section~6.2]{KPW2022}. Thus the signature is determined entirely
by the boundary rho invariant, which is the numerical tool used
throughout this paper.

Let $\Sigma_{g,n}$ be a compact, connected, oriented surface of genus
$g$ with $n\geq0$ boundary components, and orient its boundary loops
by the induced boundary orientations. We use the presentation
\begin{equation}\label{eq:presentation}
 \pi_1(\Sigma_{g,n})=
 \left\langle a_1,b_1,\ldots,a_g,b_g,c_1,\ldots,c_n
 \;\middle|\;
 \prod_{r=1}^g[a_r,b_r]c_1\cdots c_n=1\right\rangle,
\end{equation}
where $[a,b]=aba^{-1}b^{-1}$ and the boundary generators are omitted
when $n=0$. Let
\[
 \E_p=\mathrm{EU}(p)
 =\{A\in\U(p):\det(A-\Id)\ne0\}
\]
be the elliptic-unipotent locus in $\U(p)$.
Every element of $\U(p)$ is semisimple, so its unipotent part
in the Jordan decomposition is the identity; the terminology
is inherited from the general indefinite unitary setting \cite[Definition~4.5]{KPW2022}.
Define
\[
 \Srep_{g,n}(p)=
 \{\phi\in\Hom(\pi_1(\Sigma_{g,n}),\U(p)):
       \phi(c_j)\in\E_p\text{ for every }j\}.
\]
For $n=0$, this condition is vacuous, so $\Srep_{g,0}(p)$ is the
full representation space. We first study representations themselves
and then pass to the quotient by simultaneous conjugation.

For $A\in\E_p$, write its eigenvalues, with multiplicities, as
$e^{i\theta_1},\ldots,e^{i\theta_p}$, where $0<\theta_\ell<2\pi$.
In the compact unitary normalization of \cite{KPW2022},
\begin{equation}\label{eq:intro-rho}
 \rho(A)=p-\frac1\pi\sum_{\ell=1}^p\theta_\ell.
\end{equation}
This is a continuous real-valued function on $\E_p$.
Define
\begin{equation}\label{eq:R-k-intro}
 \boldsymbol{\rho}(\phi)=\sum_{j=1}^n\rho\bigl(\phi(c_j)\bigr),
 \qquad
 k(\phi)=\frac{np-\boldsymbol{\rho}(\phi)}2.
\end{equation}
Equivalently, if $\phi(c_j)=e^{i\Theta_j}$ with
$0<\Theta_j<2\pi\Id$, then
$k(\phi)=(2\pi)^{-1}\sum_j\tr\Theta_j$.
Taking determinants in \eqref{eq:presentation} shows that $k$ is an
integer. It is continuous and therefore constant on each connected
component. Empty sums are understood to be zero, so $ \boldsymbol{\rho}=k=0$ when
$n=0$.

The signature formula specializes to
\begin{equation}\label{eq:signature-intro}
 \sign(\phi)=\boldsymbol{\rho}(\phi)=np-2k(\phi).
\end{equation}
For closed surfaces the same equality holds with all three terms zero.
In particular, when $n\geq1$, the boundary condition forces
$\sign(\phi)\equiv np\pmod2$. This explains why the signature values
on our open locus form a parity subset of the unrestricted ranges
described above. Knowing these values separates different levels;
the main topological issue is to prove that each nonempty level is
connected.

\subsection{Main results}

\begin{theorem}\label{thm:main}
Let $p\geq1$ and $g,n\geq0$. The nonempty fibers of $k$ are precisely
the connected components of $\Srep_{g,n}(p)$, and each is path connected.
The possible values and the number of components are
\[
\begin{array}{c|c|c}
 \text{surface}&\text{values of }k&\#\pi_0(\Srep_{g,n}(p))\\ \hline
 n=0&\{0\}&1\\
 g=0,\ n=1&\varnothing&0\\
 g=0,\ n\geq2&p,p+1,\ldots,(n-1)p&(n-2)p+1\\
 g\geq1,\ n\geq1&1,2,\ldots,np-1&np-1.
\end{array}
\]
The set $\{1,\ldots,np-1\}$ is understood to be empty when $np=1$.
Equivalently, either $\boldsymbol{\rho}$ or the signature is a complete invariant of
connected components. The same component classification and path
connectedness hold for the quotient $\Srep_{g,n}(p)/\U(p)$ with its
quotient topology.
\end{theorem}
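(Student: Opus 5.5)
We first determine the range of $k$ and then prove that each nonempty fiber is path connected. For the range, write $\phi(c_j)=e^{i\Theta_j}$ with $0<\Theta_j<2\pi\Id$, so that $k=(2\pi)^{-1}\sum_j\tr\Theta_j$ lies strictly between $0$ and $np$. This gives $1\le k\le np-1$ when $n\ge1$. For $g=0$ the relation $c_1\cdots c_n=1$ gives more. The key input is the multiplicative eigenvalue inequality for products of unitary matrices (Agnihotri--Woodward / Belkale), in its simplest trace form. If $A_1\cdots A_m=\Id$ with $A_j=e^{i\Theta_j}$ and $0\le\Theta_j<2\pi\Id$, then $\sum_j\tr\Theta_j\in 2\pi\Z$. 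Moreover, applying the inequality to the single largest eigenangle (equivalently, the trivial case of the Horn-type inequalities), each $\Theta_j$ satisfies $\theta_{\max}(\Theta_j)\le\sum_{i\ne j}\theta_{\max}(\Theta_i)$. To get the bound $p\le k\le(n-1)p$ I plan a direct argument instead. Since $c_n^{-1}=c_1\cdots c_{n-1}$, and $c_n^{-1}=e^{i(2\pi\Id-\Theta_n)}$ is again elliptic-unipotent, it suffices to prove $k\ge p$ for all $n$; the upper bound then follows from the symmetry $\phi(c_j)\mapsto\phi(c_j)^{-1}$, which sends $k$ to $np-k$. For $k\ge p$, consider the path of unitary matrices $t\mapsto e^{it\Theta_1}\cdots e^{it\Theta_n}$, $t\in[0,1]$. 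It is a loop from $\Id$ to $\Id$. Each eigenvalue moves counterclockwise, since each factor's generator is positive, and a standard monotonicity result for products of positive unitary flows says every eigenangle strictly increases. Hence each of the $p$ eigenvalue branches winds at least once, and the winding of $\det$, namely $k$, is at least $p$. For $n=1$ the same argument gives $e^{i\Theta_1}=\Id$, a contradiction, so the disk case is empty. For $n=0$, $\Hom(\pi_1,\U(p))$ is connected by the classical fact (the commutator map is surjective with connected fibers, or Atiyah--Bott), and $k=0$.

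Realization comes next. For $g\ge1$, take $a_1,b_1$ with $[a_1,b_1]$ equal to any prescribed element of $\SU(p)$; every element of $\SU(p)$ is a commutator. Then choose $\Theta_j$ with the prescribed $\tr$ sum, set the $c_j$ to be diagonal, and absorb the determinant-one correction into the commutator. This realizes every $k\in\{1,\dots,np-1\}$. For $g=0$, $n\ge2$, I will use diagonal matrices. I choose eigenangles of the $c_j$ in $(0,2\pi)$ such that each diagonal slot sums to a multiple of $2\pi$. The slot sums $2\pi m_\ell$ with $1\le m_\ell\le n-1$ achieve every total $k=\sum m_\ell\in[p,(n-1)p]$.

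Connectedness of fibers is the main obstacle. The plan is to deform any $\phi$ in a fiber to a normal form within the fiber. First use the rho-invariant continuity: the boundary data $(\Theta_1,\dots,\Theta_n)$ range over a product of open simplices modulo permutations. For $g\ge1$, the map $\U(p)^{2g}\times\E_p^{n-1}\to\U(p)$, $(a,b,c)\mapsto$ the forced value of $c_n$, lets us parametrize the fiber as the preimage of $\E_p$ with a fixed $\det$-winding. I would show that the commutator map $\U(p)^2\to\SU(p)$ is a fibration-like map with connected fibers, and then reduce to connectedness of $\{(\Theta_j)\}$ with fixed trace sum, a convex set. For $g=0$, I would argue by induction on $n$ via the fiber of the product map: $c_1c_2$ ranges over $\U(p)$, and splitting $\Sigma_{0,n}$ along a curve reduces to the three-holed sphere. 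There I expect to identify the components explicitly with Grassmannian-type spaces, via Cayley transform to Hermitian matrices, and check that they are connected. Handling the case where the cut curve's holonomy has eigenvalue $1$ requires a genericity perturbation, and this is where I expect the most work. The quotient statement follows at the end, since $\U(p)$ is connected: components of the quotient correspond to components upstairs, and path-connectedness descends.
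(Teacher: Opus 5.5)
\paragraph{Verdict.} Your proposal has a genuine gap: the genus-zero induction never connects the different pieces of a fixed $k$-level. The range and realization steps are correct, and the positive-genus outline follows the paper but is imprecise.

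\paragraph{What works.} Your bound argument is correct and takes a different route from the paper. Along $U(t)=e^{it\Theta_1}\cdots e^{it\Theta_n}$ one has $U'=iH(t)U$, where $H(t)$ is a sum of unitary conjugates of the $\Theta_j$ and hence positive definite. So the analytically chosen eigenangle branches increase strictly from $0$ to positive multiples of $2\pi$, which gives $k\geq p$. Reversing and inverting the boundary tuple gives $k\leq(n-1)p$. The paper instead reads these bounds off its matrix theorem (the values of $\Delta_m$ on the pieces $\mathcal R_{\mathbf q}$), or off $b_\pm\geq0$ and Falbel--Wentworth. Your realization step matches the paper.

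\paragraph{The genus-zero gap.} Cut along a curve with holonomy $D=c_1c_2$. On the open set where $D\in\E_p$, two indices are defined and locally constant: $k'$ for the pants $(c_1,c_2,D^{-1})$ and $k''$ for $(D,c_3,\ldots,c_n)$. They satisfy $k=k'+k''-p$, because $\tr\Theta(D)+\tr\Theta(D^{-1})=2\pi p$. So a fixed $k$-level meets this generic locus in several disjoint open-closed pieces, one for each admissible splitting $(k',k'')$. For $n=4$ and $k=2p$ there are $p+1$ such pieces.

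Your genericity perturbation, which is the paper's Lemma~\ref{lem:perturb}, only pushes a point into \emph{some} piece. Induction then tells you at most that each piece is connected. Even that needs an argument for the fibred product over $D$; the paper gets it from the global chart $\Psi_m$. Nothing in your plan joins two pieces with different splittings, and any path doing so must cross the wall $\det(D-\Id)=0$. As written, you would only show that each $k$-level is a union of connected pieces, not that it is connected.

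\paragraph{The missing idea.} The paper supplies this wall-crossing in Lemma~\ref{lem:transfer}. On diagonal representatives it swaps one eigenangle $\eps\leftrightarrow2\pi-\eps$ between two factors while keeping the total product fixed. Along the way an intermediate product passes through eigenvalue $1$, and $\mathbf q$ undergoes a unit transfer. Finitely many transfers join all pieces with the same $\sum_j q_j$. An alternative is the route of Section~\ref{sec:classical}: convexity of the admissible spectra, plus Alekseev--Malkin--Meinrenken connectedness of fibres with prescribed conjugacy classes, plus Lemma~\ref{lem:closed-map}.

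\paragraph{Positive genus.} Your outline has the same structure as the paper, but two points need correcting.
\begin{itemize}
\item For $p\geq2$, $\mu_g$ is not fibration-like. For $g=1$, the fibre over $\Id$ consists of the commuting pairs and has dimension $p^2+p$, larger than the generic $p^2+1$, so there is no path lifting to use.
\item Connectedness of every fibre of $\mu_g$, for all $g$ and not only for $\U(p)^2\to\SU(p)$, is not something you can expect to reprove. It is the Alekseev--Malkin--Meinrenken theorem, passed from $\SU(p)$ to $\U(p)$ by scalars as in Theorem~\ref{thm:commutator}; you already invoke it for $n=0$.
\end{itemize}
What replaces the fibration property is that $\beta:\Srep_k\to\B_k$ is a closed surjection, because $\U(p)^{2g}$ is compact. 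Lemma~\ref{lem:closed-map} then applies over the convex base $\B_k$, and local path connectedness upgrades connectedness to path connectedness.
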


The closed-surface statement is classical: the sphere gives a single
point, while connectedness in positive genus follows from
\cite[Theorem~1]{HL2005}. The representation spaces are real algebraic
and hence locally path connected, so they are also path connected.
This case is included to give a uniform description for all compact
oriented surfaces. For nonempty boundary, the theorem
says that two representations can be joined through representations
satisfying the boundary condition if and only if their signatures agree.

Once a handle is present, the number of components is independent of
the genus. This reflects the fact that every determinant-compatible
boundary tuple can be realized by handle holonomies, and the space of
such realizations is connected. In genus zero, the product relation
imposes additional restrictions. For fixed $n\geq2$, adding the first
handle allows exactly $2p-2$ further values of $k$. In rank one the
number is $n-1$ for every genus when $n\geq1$; in particular, the
one-boundary space is empty. More generally, a surface of positive
genus with one boundary component has $p-1$ components.

The matrix statement underlying the genus-zero proof also describes
the topology of the two-factor spaces. Here $m$ denotes the number of
matrix factors, to distinguish it from the number $n$ of boundary
components.

\begin{theorem}\label{thm:products-intro}
For $m\geq1$, let
\[
 \X_{p,m}=\{(A_1,\ldots,A_m)\in\E_p^m:
                         A_1\cdots A_m\in\E_p\}.
\]
The function
\[
 \Delta_m(A_1,\ldots,A_m)
   =\sum_{j=1}^m\rho(A_j)-\rho(A_1\cdots A_m)
\]
has image
$\{(m-1)p-2K:K=0,1,\ldots,(m-1)p\}$, and each of its fibers is
path connected. Thus $\X_{p,m}$ has $(m-1)p+1$ connected components.
For $m=2$, the component with $\Delta_2=2r-p$ has the homotopy type
of $\Gr_r(\C^p)$, for $0\leq r\leq p$.
\end{theorem}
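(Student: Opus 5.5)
The plan is to handle the case $m=2$ by a Cayley transform, which turns $\X_{p,2}$ into a space of Hermitian matrices, and then get general $m$ by induction together with explicit diagonal paths.

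\textbf{Step 1: the Cayley transform.} For $A\in\E_p$ put $H_A=i(\Id+A)(\Id-A)^{-1}\in\Herm(p)$. This is a homeomorphism $\E_p\cong\Herm(p)$. An eigenangle $\theta\in(0,2\pi)$ of $A$ goes to the eigenvalue $\cot(\theta/2)$ of $H_A$. Using $(\Id-A)^{-1}=(\Id-iH_A)/2$, a direct computation gives
\[
 (\Id-A)^{-1}(\Id-AB)(\Id-B)^{-1}=(\Id-B)^{-1}+(\Id-A)^{-1}A=-\tfrac{i}{2}(H_A+H_B).
\]
Hence $AB\in\E_p$ if and only if $H_A+H_B$ is invertible. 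So $\X_{p,2}$ is homeomorphic to
\[
 \{(H_1,H_2)\in\Herm(p)^2:\det(H_1+H_2)\neq0\}.
\]
Via $(H_1,H_2)\mapsto(H_1,H_1+H_2)$ this is $\Herm(p)\times\{\text{invertible Hermitian matrices}\}$. The invertible Hermitian matrices with $r$ positive eigenvalues deformation retract, by straight-line homotopy of the eigenvalues to $\pm1$, onto $\U(p)/(\U(r)\times\U(p-r))\cong\Gr_r(\C^p)$. So $\X_{p,2}$ has exactly $p+1$ components, each with the homotopy type of a Grassmannian, and each is path connected.

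\textbf{Step 2: matching components with values of $\Delta_2$.} Write $A=e^{i\Theta_A}$ with $0<\Theta_A<2\pi\Id$, and similarly for $B$ and $AB$. Then
\[
 \Delta_2=p-\tfrac1\pi\bigl(\tr\Theta_A+\tr\Theta_B-\tr\Theta_{AB}\bigr)=p-2K,
\]
where $K\in\Z$ by taking determinants. The function $\Delta_2$ is continuous and integer valued, so it is constant on each of the $p+1$ components. It therefore suffices to evaluate it on one diagonal representative of each component. Take $A=B=e^{i\Theta}$ with $\Theta=\diag(\alpha\Id_{s},\beta\Id_{p-s})$, where $\alpha\in(\pi,2\pi)$ and $\beta\in(0,\pi)$. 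Then $H_A+H_B=2H_A$ has exactly $p-s$ positive eigenvalues. In $AB=e^{2i\Theta}$ exactly $s$ angles wrap past $2\pi$, so $K=s$.

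These representatives show two things. First, the $p+1$ components carry the $p+1$ distinct values $p-2K$, $K=0,\ldots,p$. Second, the component with $\Delta_2=2r-p$ (that is, $K=p-r$) is the one where $H_A+H_B$ has $r$ positive eigenvalues, hence is homotopy equivalent to $\Gr_r(\C^p)$. Step 1 already gives that every possible value of $K$ lies in $\{0,\dots,p\}$, since only $p+1$ components exist. As an independent check, one can run the monotone-eigenvalue argument along $t\mapsto Ae^{itY}$ with $B=e^{iY}$: the eigenangles increase, and $K$ counts how many cross $2\pi$.

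\textbf{Step 3: general $m$ by induction.} Set $P=A_1\cdots A_{m-1}$. Whenever $P\in\E_p$ we have the additivity
\[
 \Delta_m(A_1,\ldots,A_m)=\Delta_{m-1}(A_1,\ldots,A_{m-1})+\Delta_2(P,A_m).
\]
Combined with Step 2 and induction, this shows the image is contained in $\{(m-1)p-2K:0\le K\le(m-1)p\}$. Diagonal examples realize every such value.

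The locus where $P\notin\E_p$ is a proper real-algebraic hypersurface. Also $\X_{p,m}$ is open, and in it only $A_{m-1}$ needs to move slightly. So any tuple can be joined, inside its fiber, to one with $P\in\E_p$. If the tuple is nongeneric, the small perturbation stays in the same $\Delta_m$-fiber by local constancy.

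Now fix a fiber $\Delta_m=(m-1)p-2K$ and a generic tuple in it. By induction, move $(A_1,\ldots,A_{m-1})$ within its $\Delta_{m-1}$-fiber to a diagonal normal form, carrying $A_m$ along so that $\Delta_2(P,A_m)$ stays constant. This uses the product structure of Step 1: keep $H_P+H_{A_m}$ in a fixed inertia class, for instance by keeping $H_{A_m}$ fixed after adjusting a large multiple. Then use Step 1 to move $(P,A_m)$ to diagonal normal form as well.

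\textbf{Main obstacle.} What remains is to connect diagonal tuples with the same total $K$ but different splittings $K=K_1+K_2$, where $K_1$ and $K_2$ are the crossing counts of the first $m-1$ factors and of the last product. I expect this to be the hardest part. I would try explicit paths of commuting diagonal matrices: move one eigenangle of $A_{m-1}$ down and the corresponding eigenangle of $A_m$ up. This transfers one wrap from the first product to the last while the total angle sum, and hence $K$, is unchanged. Throughout, all the $A_j$ and the full product must keep every eigenangle in $(0,2\pi)$; the intermediate product is allowed to leave $\E_p$ momentarily.

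Finally, since $\U(p)$ is connected, the components and their path connectedness pass to the quotient $\X_{p,m}/\U(p)$.
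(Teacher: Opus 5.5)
Your outline is correct. For $m=2$ it is essentially the paper's argument (Proposition~\ref{prop:pair}); for general $m$ you take a different route.

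One slip first. Your $H_A=i(\Id+A)(\Id-A)^{-1}$ is $-h(A)$ in the paper's notation, so an eigenangle $\theta$ goes to $-\cot(\theta/2)$, not $\cot(\theta/2)$. In your representative $2H_A$ therefore has $s$ (not $p-s$) positive eigenvalues, and $\Delta_2=-\sign(H_A+H_B)$. The homotopy type is unaffected, since $\Gr_{p-r}(\C^p)\cong\Gr_r(\C^p)$.

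For $m\geq3$ the paper does not induct.
\begin{itemize}
\item It puts one Cayley chart $(h(P_1),h(P_1)-h(P_2),\ldots,h(P_{m-1})-h(P_m))$ on the locus where every partial product avoids $1$.
\item This gives $(p+1)^{m-1}$ pieces $\mathcal R_{\mathbf q}$ with uniform diagonal representatives ($A_1=-\Id$, other angles $\eps$ or $2\pi-\eps$), on which $\Delta_m=(m-1)p-2\sum_jq_j$.
\item Pieces with equal $\sum_jq_j$ are joined by swapping two such angles in one diagonal slot, which fixes the total product (Lemma~\ref{lem:transfer}).
\item Arbitrary tuples reach this locus by rotating $A_1$ by a phase (Lemma~\ref{lem:perturb}).
\end{itemize}
You instead constrain only $P=A_1\cdots A_{m-1}$ and index pieces by splits $K=K_1+K_2$. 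You connect each piece by induction plus path lifting in $\X_{p,2}\cong\Herm(p)\times\Herm(p)^\times$. The simplest way to do the lifting is to hold $H_{P(t)}+H_{A_m(t)}$ constant, then move $A_m$ with $P$ fixed inside one $\HH_r$. Your approach reuses only the two-factor chart. The paper's chart gives a non-inductive, fully explicit description of the finer decomposition.

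Your ``main obstacle'' uses the same mechanism as Lemma~\ref{lem:transfer}, and it does close, but you must choose the representative. Diagonal paths in $\X_{p,m}$ preserve each slot's total wrap count $K_{1,\ell}+K_{2,\ell}$. So for adjacent splits ($K_1\geq1$, $K_2\leq p-1$) you need your induction's non-diagonal paths to reach a diagonal point of the $(K_1,K_2)$-piece with these properties:
\begin{itemize}
\item some slot $\ell$ has $K_{1,\ell}\geq1$ and $K_{2,\ell}=0$;
\item in that slot, the first $m-2$ angles sum to just below $2\pi K_{1,\ell}$;
\item $\theta_{m-1,\ell}$ and $\theta_{m,\ell}$ are small.
\end{itemize}
Lowering $\theta_{m-1,\ell}$ while raising $\theta_{m,\ell}$ by the same amount then crosses the threshold. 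All angles stay in $(0,2\pi)$ and the total product stays fixed, while $P$ may pass through eigenvalue $1$.
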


In genus zero, eliminating the last boundary matrix identifies
$\Srep_{0,n}(p)$ with $\X_{p,n-1}$ for $n\geq2$, and
$\rho(A^{-1})=-\rho(A)$ identifies $\boldsymbol{\rho}$ with $\Delta_{n-1}$.
Only the individual factors and their final product are constrained
in Theorem~\ref{thm:products-intro}. Intermediate products may acquire
eigenvalue $1$ during a deformation. Requiring every partial product
to avoid $1$ instead produces $(p+1)^{m-1}$ components. Our proof
describes these finer pieces and constructs paths joining precisely
those with the same total index.

We also obtain consequences for the intersection form and for
irreducible representations. When $n\geq1$ and the space is nonempty,
boundary acyclicity gives
$\dim H^1(\Sigma_{g,n};\mathcal E_\phi)=-p\chi(\Sigma_{g,n})$.
Together with \eqref{eq:signature-intro}, this yields
\[
 b_+(\phi)=p(g+n-1)-k(\phi),\qquad
 b_-(\phi)=p(g-1)+k(\phi).
\]
For $g=0$ and $n\geq3$, the two extreme components therefore have
positive or negative definite intersection forms. Moreover, when
$p\geq2$, $n\geq1$, and $2g+n-1\geq2$, the irreducible part of
each component is nonempty and path connected. Thus $k$ also labels
the components of the smooth irreducible quotient.  These results are developed in
Section~\ref{sec:applications}.

\subsection{Relation to earlier work}

The signature interpretation belongs to the theory of signatures of
local systems and spectral boundary corrections developed by Meyer,
Atiyah, and Atiyah--Patodi--Singer
\cite{MeyerLocal,MeyerSurface,Atiyah1987,APS1975I,APS1975II}.
Kim--Pansu--Wan \cite{KPW2022} express the signature in terms of
the Toledo invariant and an explicitly computable boundary
rho invariant. Their signature range theorems
\cite{KPW2026,KPWUnitary2026} concern which integers occur.
Here we study the topology of the corresponding levels on the open
unitary locus specified above. For $n\geq2$, the rank-one component
classification already appears in Kim--Wan
\cite[Proposition~2.2 and Corollary~2.3]{KW2025}, through
$\mathrm{SO}(2)\cong\U(1)$. Our signature is the complex Hermitian
signature; their real symplectic normalization is twice this value
on the corresponding rank-one unitary representations.

The positive-genus proof uses classical connectedness results for
compact groups. Alekseev--Malkin--Meinrenken's theorem on group-valued
moment maps implies connectedness of the fibers of the
product-of-commutators map for compact simply connected groups
\cite[Theorem~7.2]{AMM1998}; this consequence is stated explicitly
in \cite[Fact~3]{HL2003}. Ho--Liu also describe components with
prescribed boundary conjugacy classes \cite[Theorem~3]{HL2005}.
For $\U(p)$ in positive genus, their result gives connectedness
whenever the prescribed boundary determinants have product one.
We use this established input, explain the passage from $\SU(p)$
to $\U(p)$, and show how the boundary data can vary within a fixed
$k$-level without creating additional components.

In genus zero, the relation $C_1\cdots C_n=\Id$ is the multiplicative
eigenvalue problem. Agnihotri--Woodward and Belkale describe its
spectral constraints \cite{AW1998,Belkale2001}, and
Falbel--Wentworth formulate the total eigenangle index and the
associated affine inequalities
\cite[Theorems~2.1 and~2.2]{FW2006}. Their index is $k$ in our
normalization. On the locus where every boundary holonomy avoids
$1$, the admissible spectra of fixed index form a convex set, and
their index bounds become $p\leq k\leq(n-1)p$.
Combining this convexity with connectedness of fixed-spectrum fibers
gives an alternative proof of the genus-zero component classification.
The correspondence between unitary representations and polystable
parabolic bundles of parabolic degree zero \cite{MS1980} provides
the algebro-geometric background for this spectral theory.

Thus the component counts can also be obtained from classical
convexity and connectedness results. Our treatment places them in
a uniform rho invariant formulation and gives a direct matrix
proof in genus zero that does not require the multiplicative
eigenvalue inequalities. The explicit deformations explain how the
partial-product pieces meet and provide the Grassmannian homotopy
models and quotient contractions described above. The alternative
classical argument is given in Section~\ref{sec:classical}.

\subsection{Outline}

Section~\ref{sec:prelim} fixes the logarithmic and Cayley conventions.
Section~\ref{sec:pairs} treats two factors and identifies the homotopy
types of their components. Section~\ref{sec:products} proves the
general matrix theorem by constructing and joining Cayley-coordinate
pieces. Section~\ref{sec:surfaces} proves the surface classification.
Section~\ref{sec:applications} develops its consequences for
signatures, irreducible representations.
Section~\ref{sec:classical} explains a second route in genus zero.

\medskip
\noindent\textbf{Acknowledgments.}
Xueyuan Wan was supported by the National Key R\&D Program of China (Grant No.~2024YFA1013200) and the National Natural Science Foundation of China (Grant No.~12671100). The author used ChatGPT as an auxiliary tool for literature searches and for improving the language, grammar, and presentation of the manuscript. All AI-assisted content, including any suggested references or revisions, was reviewed, verified, and revised by the author. The author takes full responsibility for the accuracy, originality, and integrity of the manuscript.

\section{Logarithms, rho invariant, and Cayley coordinates}\label{sec:prelim}

\subsection{The logarithm with a cut at \texorpdfstring{$1$}{1}}

Let $\Herm(p)$ denote the real vector space of complex Hermitian $p\times p$ matrices. For Hermitian matrices, inequalities mean inequalities of quadratic forms. Set
\[
 \mathcal D_p=\{\Theta\in\Herm(p):0<\Theta<2\pi\Id\}.
\]
This is an open convex set. The scalar map
\(
(0,2\pi)\to S^1\setminus\{1\},
\) \(
\theta\to e^{i\theta},
\)
is a smooth bijection. Applying this fact to the eigenvalues gives a
diffeomorphism
\[
\mathcal D_p\longrightarrow\mathcal E_p,
\qquad
\Theta\longmapsto e^{i\Theta}.
\]
Indeed, if \(A\in\mathcal E_p\), then the spectral theorem gives
\(
A
 =U\operatorname{diag}
   \bigl(e^{i\theta_1},\ldots,e^{i\theta_p}\bigr)U^*,
\) \(
0<\theta_j<2\pi,
\)
where each \(\theta_j\) is uniquely determined. We define
\(
\Theta(A)
 =U\operatorname{diag}(\theta_1,\ldots,\theta_p)U^*.
\)
This definition is independent of the chosen diagonalization, and
\(\Theta(A)\) is the unique matrix in \(\mathcal D_p\) satisfying
\(e^{i\Theta(A)}=A\). Moreover,
\(
\Theta(A)=-i\mathrm{Log}_{(0,2\pi)}(A),
\)
where \(\mathrm{Log}_{(0,2\pi)}\) denotes the branch of the logarithm whose
imaginary part lies in \((0,2\pi)\). Since the spectrum of every
\(A\in\mathcal E_p\) avoids the corresponding branch cut, the
holomorphic functional calculus shows that \(A\mapsto\Theta(A)\)
is smooth.

Consequently,
\begin{equation}\label{eq:rho-log}
 \rho(A)=p-\pi^{-1}\tr\Theta(A),
 \qquad \det A=\exp(i\tr\Theta(A)).
\end{equation}
The function $\rho$ is smooth and invariant under conjugation. The spectral theorem also gives
\begin{equation}\label{eq:rho-inverse-sum}
 \Theta(A^{-1})=2\pi\Id-\Theta(A),\,
 \rho(A^{-1})=-\rho(A),\,
 \rho(A\oplus B)=\rho(A)+\rho(B).
\end{equation}
In the direct-sum identity, \(\rho\) is computed in rank \(p+q\) on
the left-hand side and in ranks \(p\) and \(q\), respectively, on the
right-hand side. Notice that $-p<\rho(A)<p$.

\begin{lemma}\label{lem:discrete}
The function $\Delta_m$ of Theorem~\ref{thm:products-intro} is continuous and takes values in $(m-1)p+2\Z$. For a surface representation in $\Srep_{g,n}(p)$, the number $k$ in \eqref{eq:R-k-intro} is an integer. If $n\geq1$, then
$0<k<np$; if $n=0$, then $k=0$.
\end{lemma}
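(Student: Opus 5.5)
The plan is to reduce every claim to the logarithm $\Theta$ and the identity \eqref{eq:rho-log}. Continuity of $\Delta_m$ is immediate: $\rho$ is smooth on $\E_p$ (this was just established via the holomorphic functional calculus), matrix multiplication is continuous, and on $\X_{p,m}$ each $A_j$ as well as the product $A_1\cdots A_m$ lie in $\E_p$. For integrality, write $t_j=\tr\Theta(A_j)$ and $t=\tr\Theta(A_1\cdots A_m)$. Then \eqref{eq:rho-log} gives
\[
 \Delta_m=(m-1)p-\pi^{-1}\Bigl(\sum_{j=1}^m t_j-t\Bigr).
\]
Taking determinants, $\exp\bigl(i\sum_j t_j\bigr)=\det(A_1\cdots A_m)=\exp(it)$. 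Hence $\sum_j t_j-t\in 2\pi\Z$, and therefore $\Delta_m\in(m-1)p+2\Z$.

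For the surface case, I use the equivalent formula $k(\phi)=(2\pi)^{-1}\sum_j\tr\Theta(\phi(c_j))$, which follows by substituting \eqref{eq:rho-log} into \eqref{eq:R-k-intro}. Apply $\det$ to the relation in \eqref{eq:presentation}. The commutators have determinant $1$, so $\prod_j\det\phi(c_j)=1$, that is, $\exp\bigl(i\sum_j\tr\Theta(\phi(c_j))\bigr)=1$. So $\sum_j\tr\Theta(\phi(c_j))\in2\pi\Z$, and $k\in\Z$. When $n=0$ the sum is empty and $k=0$ by convention.

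For the bounds with $n\geq1$: each $\Theta(\phi(c_j))\in\mathcal D_p$ has all eigenvalues in $(0,2\pi)$, so $0<\tr\Theta(\phi(c_j))<2\pi p$. Summing over the $n$ boundary components gives $0<2\pi k<2\pi np$, which is strict because $n\geq1$ guarantees at least one term. Dividing by $2\pi$ gives $0<k<np$.

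There is no real obstacle here. The only point needing care is checking that the normalization $k=(2\pi)^{-1}\sum\tr\Theta_j$ agrees with $(np-\boldsymbol\rho)/2$, which follows directly from $\rho(A)=p-\pi^{-1}\tr\Theta(A)$.
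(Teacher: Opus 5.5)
Your proposal is correct and follows the paper's proof essentially step for step: you use $\det A=\exp(i\tr\Theta(A))$ to get $\sum_j\tr\Theta(A_j)-\tr\Theta(A_1\cdots A_m)\in 2\pi\Z$, hence $\Delta_m=(m-1)p-2K$. You then apply the same determinant argument to the surface relation, together with $0<\tr\Theta(\phi(c_j))<2\pi p$, to obtain integrality of $k$ and the bounds $0<k<np$.
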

\begin{proof}
Put $P=A_1\cdots A_m$. Multiplicativity of the determinant implies
\[
 \exp\left(i[\sum_{j=1}^m\tr\Theta(A_j)-\tr\Theta(P)]\right)=1.
\]
The expression in square brackets is therefore $2\pi K$ for an integer $K$. Substituting into \eqref{eq:rho-log} gives
\begin{equation}\label{eq:Delta-K}
 \Delta_m=(m-1)p-2K.
\end{equation}
Continuity follows from that of $\Theta$; thus both $\Delta_m$ and $K$ are locally constant.

For a surface representation, every commutator has determinant one, so \eqref{eq:presentation} yields $\prod_j\det\phi(c_j)=1$. Hence $\sum_j\tr\Theta(\phi(c_j))\in2\pi\Z$. Each summand lies strictly between $0$ and $2\pi p$, proving the asserted bounds.
\end{proof}

\subsection{Two Cayley identities}

Define
\begin{equation}\label{eq:Cayley}
 h(A)=i(A+\Id)(A-\Id)^{-1},\quad
 C(H)=(H+i\Id)(H-i\Id)^{-1}.
\end{equation}
The scalar identity
\[
 i\frac{e^{i\theta}+1}{e^{i\theta}-1}=\cot(\theta/2)
\]
and the spectral theorem show that $h:\E_p\to\Herm(p)$ and $C:\Herm(p)\to\E_p$ are inverse diffeomorphisms. In particular $\E_p$ is contractible. Since
$\theta=\pi-2\arctan(\cot(\theta/2))$ on $(0,2\pi)$, one also has
\begin{equation}\label{eq:rho-arctan}
 \rho(C(H))=\frac2\pi\tr(\arctan H).
\end{equation}
Here \(\arctan:\mathbb R\to(-\pi/2,\pi/2)\) denotes the principal
branch. For \(0<\theta<2\pi\), we have
\(
\cot(\theta/2)
=\tan(\frac{\pi}{2}-\frac{\theta}{2}),
\)
and \(\frac{\pi}{2}-\frac{\theta}{2}\in(-\pi/2,\pi/2)\). Hence
\(
\theta
=\pi-2\arctan(\cot\frac{\theta}{2}).
\)
Now let \(\lambda_1,\ldots,\lambda_p\) be the eigenvalues of
\(H\). If \(e^{i\theta_j}\), with \(0<\theta_j<2\pi\), is the
corresponding eigenvalue of \(C(H)\), then
\(
\lambda_j=\cot(\theta_j/2),
\) \(
\theta_j=\pi-2\arctan(\lambda_j).
\)
Therefore,
\[
\begin{aligned}
\rho(C(H))
 &=p-\frac{1}{\pi}\sum_{j=1}^p\theta_j=\frac{2}{\pi}\sum_{j=1}^p\arctan(\lambda_j)
 =\frac{2}{\pi}\tr(\arctan H).
\end{aligned}
\]

\begin{lemma}\label{lem:cayley-identities}
For $H,K\in\Herm(p)$, the following identities hold:
\begin{equation}\label{eq:cayley-product}
   (H-i\Id)(C(H)C(K)-\Id)(K-i\Id)=2i(H+K),
\end{equation}
\begin{equation} \label{eq:cayley-difference}
    C(H)-C(K)=2i(H-i\Id)^{-1}(K-H)(K-i\Id)^{-1}.
\end{equation}
Consequently, $C(H)C(K)-\Id$ is invertible exactly when $H+K$ is invertible, and $C(H)-C(K)$ is invertible exactly when $H-K$ is invertible.
\end{lemma}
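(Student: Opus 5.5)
The plan is to verify the two identities by direct algebra, using only that $H\pm i\Id$ are invertible and commute with $H$ (the eigenvalues $\pm i$ cannot be eigenvalues of a Hermitian matrix). The invertibility statements then follow by multiplying through by these invertible factors.

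For \eqref{eq:cayley-product}, first note that $(H-i\Id)C(H)=(H-i\Id)(H+i\Id)(H-i\Id)^{-1}=H+i\Id$, since $H+i\Id$ and $(H-i\Id)^{-1}$ are both functions of $H$ and hence commute. Likewise $C(K)(K-i\Id)=K+i\Id$. Expanding gives
\[
 (H-i\Id)(C(H)C(K)-\Id)(K-i\Id)=(H+i\Id)(K+i\Id)-(H-i\Id)(K-i\Id).
\]
In this difference the $HK$ terms and the $-\Id$ terms cancel. What remains is $iH+iK-(-iH-iK)=2i(H+K)$.

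For \eqref{eq:cayley-difference}, write $C(H)=\Id+2i(H-i\Id)^{-1}$, which holds because $H+i\Id=(H-i\Id)+2i\Id$. Then
\[
 C(H)-C(K)=2i\bigl[(H-i\Id)^{-1}-(K-i\Id)^{-1}\bigr].
\]
We use the resolvent identity $X^{-1}-Y^{-1}=X^{-1}(Y-X)Y^{-1}$ with $X=H-i\Id$ and $Y=K-i\Id$, for which $Y-X=K-H$. This yields exactly the right-hand side of \eqref{eq:cayley-difference}.

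For the consequences, the outer factors $H-i\Id$, $K-i\Id$, and their inverses are invertible. Hence $C(H)C(K)-\Id$ is invertible if and only if $2i(H+K)$ is, that is, if and only if $H+K$ is invertible. Similarly, $C(H)-C(K)$ is invertible if and only if $K-H$ is invertible.

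No step is a real obstacle. The only point requiring care is the order of the noncommuting matrices $H$ and $K$: the factors $H-i\Id$ must be kept on the left and $K-i\Id$ on the right, and commutation is used only between functions of the same matrix.
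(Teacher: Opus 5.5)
Your proof is correct and follows essentially the same route as the paper. The paper also reduces \eqref{eq:cayley-product} to $(H+i\Id)(K+i\Id)-(H-i\Id)(K-i\Id)$, using that $H-i\Id$ commutes with $C(H)$, and proves \eqref{eq:cayley-difference} from $C(H)=\Id+2i(H-i\Id)^{-1}$ and the resolvent identity; the invertibility claims then follow from the invertible outer factors.
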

\begin{proof}
The matrices $H-i\Id$ and $K-i\Id$ are invertible. Moreover $H-i\Id$ commutes with $C(H)$, because both are functions of $H$. Thus the left side of \eqref{eq:cayley-product} equals
\[
 (H+i\Id)(K+i\Id)-(H-i\Id)(K-i\Id)=2i(H+K).
\]
This computation does not require $H$ and $K$ to commute. For the second identity, use $C(H)=\Id+2i(H-i\Id)^{-1}$ and the resolvent identity
\[
 (H-i\Id)^{-1}-(K-i\Id)^{-1}
 =(H-i\Id)^{-1}(K-H)(K-i\Id)^{-1}.
\]
The invertibility assertions follow by multiplying by the invertible outside factors.
\end{proof}

\section{Two factors and the pair of pants}\label{sec:pairs}

Let \(\Herm(p)^\times\) denote the space of invertible Hermitian
\(p\times p\) matrices. For an invertible Hermitian matrix $S\in \Herm(p)^\times$, let $n_+(S)$ and $n_-(S)$ be its numbers of positive and negative eigenvalues. Write
$\sign S=n_+(S)-n_-(S)$, and set
\[
 \HH_r=\{S\in\Herm(p)^\times:n_+(S)=r\}.
\]

\begin{lemma}\label{lem:Herm-components}
The sets $\HH_r$, $0\leq r\leq p$, are the path components of the invertible Hermitian matrices. Moreover, \(\HH_r\) strongly deformation retracts onto the subspace
\(
\{2P_V-I_p:V\in\Gr_r(\C^p)\},
\)
where \(P_V\) denotes the orthogonal projection onto \(V\). This
subspace is naturally identified with the Grassmannian
\(\Gr_r(\C^p)\). Consequently, \(\HH_r\) has the homotopy type of
\(\Gr_r(\C^p)\).\end{lemma}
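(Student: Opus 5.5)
The plan is to use the spectral theorem and deform eigenvalues while keeping eigenvectors fixed. Let $\sigma:\mathbb R\setminus\{0\}\to\{\pm1\}$ be the sign function. For $S\in\HH_r$ with spectral decomposition $S=\sum_j\lambda_jP_j$, where the $P_j$ are the spectral projections for distinct eigenvalues $\lambda_j$, define
\[
 F(S,t)=\sum_j\bigl((1-t)\lambda_j+t\,\sigma(\lambda_j)\bigr)P_j,\qquad t\in[0,1].
\]
Each eigenvalue moves along a segment from $\lambda_j$ to $\pm1$ that does not cross $0$. Hence $F(S,t)$ stays invertible and keeps $n_+=r$. Moreover, $F(S,0)=S$ and $F(S,1)=2P_V-\Id$, where $V$ is the positive spectral subspace of $S$, so $\dim V=r$. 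If $S=2P_V-\Id$ already, then every eigenvalue is $\pm1$ and $F(S,t)=S$ for all $t$. This gives a strong deformation retraction, provided $F$ is continuous.

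Continuity of $F$ is the only real point to check. I would write $F(S,t)=f_t(S)$, where $f_t(x)=(1-t)x+t\,\sigma(x)$. This function is continuous, in fact real-analytic, on $\mathbb R\setminus\{0\}$, jointly in $(x,t)$. Near a given $S_0\in\Herm(p)^\times$, the spectrum stays inside a compact set $K$ avoiding a neighborhood of $0$. The spectral projection onto the positive part is then given by the Riesz integral
\[
 P_+(S)=\frac1{2\pi i}\oint_\gamma(z-S)^{-1}\,dz,
\]
with $\gamma$ a contour enclosing the positive part of $K$. This depends continuously, indeed smoothly, on $S$. Then
\[
 F(S,t)=(1-t)S+t\bigl(2P_+(S)-\Id\bigr),
\]
which is manifestly jointly continuous. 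This formula also confirms that $F(\cdot,1)$ lands in the image of $\Gr_r(\C^p)$.

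For the identification with the Grassmannian, the map $V\mapsto 2P_V-\Id$ is a continuous bijection from the compact space $\Gr_r(\C^p)$ onto a Hausdorff subspace, hence a homeomorphism. Its inverse $S\mapsto\ker(S-\Id)$ is also continuous by the same projection formula.

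For the path components, observe the following. The count $n_+$ is locally constant on $\Herm(p)^\times$, since eigenvalues depend continuously on $S$ and cannot cross $0$; alternatively, $P_+(S)$ is continuous, so its trace is locally constant. Hence distinct $\HH_r$ lie in different components. Each $\HH_r$ is path connected because it retracts onto $\Gr_r(\C^p)$, which is path connected as a quotient of the connected group $\U(p)$. Each $\HH_r$ is also open, and they partition the space, so the $\HH_r$ are exactly the path components. The only step needing care is the joint continuity of $F$, which the Riesz-projection formula handles.
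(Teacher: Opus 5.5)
Your proposal is correct and is essentially the paper's proof. Both use the straight-line homotopy $(1-t)S+t\,\sgn(S)$ to the Hermitian involution $2P_+(S)-I_p$, separate components by local constancy of $n_+$, and get path connectedness from $\Gr_r(\C^p)$. The only difference is that you prove continuity of the sign map with the Riesz projection integral, whereas the paper uses the formula $\sgn(S)=S(S^2)^{-1/2}$.
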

\begin{proof}
We first observe that the number of positive
eigenvalues is constant along every path in \(\Herm(p)^\times\).
Indeed, the eigenvalues of a Hermitian matrix vary continuously with
the matrix. Thus, for an eigenvalue to change from positive to
negative, it would have to pass through \(0\). At that moment the
matrix would no longer be invertible. Consequently, the integer $n_+(S)$
 is locally constant on
\(\Herm(p)^\times\). It follows that every path component of
\(\Herm(p)^\times\) is contained in one of the sets \(\HH_r\).

We next show that each \(\HH_r\) is path-connected and determine its
homotopy type. For \(S\in\HH_r\), define the matrix sign of \(S\) by
\(
\sgn(S)=S(S^2)^{-1/2}.
\)
Since \(S\) is Hermitian and invertible, \(S^2\) is positive definite,
so its positive inverse square root \((S^2)^{-1/2}\) is well defined.
If
\(
S=U\diag(\lambda_1,\ldots,\lambda_p)U^*
\)
is a spectral decomposition of \(S\), then
\[
\sgn(S)
=
U\diag\bigl(
\sgn(\lambda_1),\ldots,\sgn(\lambda_p)
\bigr)U^*.
\]
Thus \(\sgn(S)\) is obtained from \(S\) by replacing every positive
eigenvalue by \(1\) and every negative eigenvalue by \(-1\), without
changing the corresponding eigenspaces. In particular,
\(\sgn(S)^*=\sgn(S)\), \(\sgn(S)^2=I_p,
\)
and \(\sgn(S)\) has exactly \(r\) positive eigenvalues.

Consider the homotopy
\[
F_t(S)=(1-t)S+t\sgn(S),
\qquad 0\leq t\leq1.
\]
The map \(S\mapsto\sgn(S)\) is continuous, indeed smooth, on the
space of invertible Hermitian matrices, so \(F_t(S)\) depends
continuously on both \(S\) and \(t\). In the above eigenbasis of \(S\),
the eigenvalues of \(F_t(S)\) are
\(
(1-t)\lambda_j+t\sgn(\lambda_j).
\)
If \(\lambda_j>0\), then this number remains positive for every
\(t\in[0,1]\); if \(\lambda_j<0\), then it remains negative.
Therefore no eigenvalue of \(F_t(S)\) passes through \(0\), and
\(
F_t(S)\in\HH_r
\) for every $0\leq t\leq1$.
Moreover,
\(
F_0(S)=S,
\) \(
F_1(S)=\sgn(S).
\)

Let
\(
\mathcal J_r
=
\{J\in\HH_r:J^2=I_p\}
\)
be the space of Hermitian involutions having \(r\) positive
eigenvalues. If \(J\in\mathcal J_r\), then \(\sgn(J)=J\), and hence
$
F_t(J)=J
$ for every $t\in[0,1]$. 
Thus \(F\) is a strong deformation retraction of \(\HH_r\) onto
\(\mathcal J_r\).

It remains to identify \(\mathcal J_r\). Every \(J\in\mathcal J_r\)
has only the eigenvalues \(1\) and \(-1\). Let \(V\subset\C^p\) be
its \(+1\)-eigenspace. Then \(\dim_{\C}V=r\), and \(V^\perp\) is the
\(-1\)-eigenspace. Therefore
\(
J=P_V-P_{V^\perp}=2P_V-I_p,
\)
where \(P_V\) is the orthogonal projection onto \(V\). Conversely,
for every \(V\in\Gr_r(\C^p)\), the matrix \(2P_V-I_p\) belongs to
\(\mathcal J_r\). Hence
\[
\mathcal J_r
=
\{2P_V-I_p:V\in\Gr_r(\C^p)\}
\cong
\Gr_r(\C^p)
=
\U(p)/\bigl(\U(r)\times\U(p-r)\bigr).
\]

The Grassmannian \(\Gr_r(\C^p)\) is path connected because
\(\U(p)\) is path connected and acts transitively on it. The strong
deformation retraction therefore implies that \(\HH_r\) is path
connected. Since the number of positive eigenvalues cannot change
along a path of invertible Hermitian matrices, two different sets
\(\HH_r\) and \(\HH_s\), with \(r\neq s\), cannot lie in the same
path component. Consequently, the sets
\(
\HH_0,\HH_1,\ldots,\HH_p
\)
are precisely the path components of \(\Herm(p)^\times\), and each
\(\HH_r\) strongly deformation retracts onto
\(\Gr_r(\C^p)\).
\end{proof}

\begin{proposition}\label{prop:pair}
The map
\[
\Phi: \X_{p,2}\longrightarrow\Herm(p)\times\Herm(p)^{\times},\quad
\Phi (A,B)= (h(A),h(A)+h(B))
\]
is a diffeomorphism. Moreover,
\begin{equation}\label{eq:pair-signature}
 \rho(A)+\rho(B)-\rho(AB)=\sign(h(A)+h(B)).
\end{equation}
Thus $\X_{p,2}$ has $p+1$ components. The component with $\Delta_2=2r-p$ is diffeomorphic to $\Herm(p)\times\HH_r$ and has the homotopy type of $\Gr_r(\C^p)$.
\end{proposition}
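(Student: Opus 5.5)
The plan is to work entirely in Cayley coordinates. Write $H=h(A)$ and $K=h(B)$, so $A=C(H)$ and $B=C(K)$. Since $h:\E_p\to\Herm(p)$ is a diffeomorphism, the pair $(A,B)\in\E_p^2$ corresponds bijectively and smoothly to $(H,K)\in\Herm(p)^2$. By Lemma~\ref{lem:cayley-identities}, \eqref{eq:cayley-product}, the condition $AB\in\E_p$, i.e.\ $\det(C(H)C(K)-\Id)\neq0$, is equivalent to $H+K$ being invertible. Hence $\X_{p,2}$ is the image of the open set $\{(H,K):H+K\in\Herm(p)^\times\}$. The linear automorphism $(H,K)\mapsto(H,H+K)$ of $\Herm(p)^2$ carries this set onto $\Herm(p)\times\Herm(p)^\times$, so $\Phi$ is a composition of diffeomorphisms.

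For \eqref{eq:pair-signature}, both sides are continuous on $\X_{p,2}$ and locally constant. The left side is locally constant by Lemma~\ref{lem:discrete}, the right side by Lemma~\ref{lem:Herm-components}. So it suffices to check equality at one point of each component of $\Herm(p)\times\Herm(p)^\times$, namely of $\Herm(p)\times\HH_r$. This product is path connected because $\Herm(p)$ is a vector space and $\HH_r$ is path connected.

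I pick the representative $H=K=D/2$ with $D=\diag(1,\ldots,1,-1,\ldots,-1)$ having $r$ entries equal to $+1$. Then $H+K=D$ has signature $2r-p$. Everything is diagonal, so by $\rho(A\oplus B)=\rho(A)+\rho(B)$ the computation reduces to $p=1$ with $H=K=\pm\tfrac12$. Using \eqref{eq:rho-arctan}, $\rho(C(\lambda))=\tfrac2\pi\arctan\lambda$.

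For commuting scalars we need the Cayley coordinate of the product $C(\lambda)C(\mu)$. Write $C(\lambda)=e^{i\theta}$ with $\theta=\pi-2\arctan\lambda$. The product has angle $\theta_\lambda+\theta_\mu$ reduced mod $2\pi$ into $(0,2\pi)$. For $\lambda=\mu=\tfrac12$ the angles are $\pi-2\arctan\tfrac12$, each less than $\pi$. Their sum lies in $(0,2\pi)$, so $\rho$ is additive except for the constant: $\rho(AB)=1-\tfrac1\pi(\theta_\lambda+\theta_\mu)=\rho(A)+\rho(B)-1$. This gives $\Delta_2=1=\sign(1)$. For $\lambda=\mu=-\tfrac12$ the angles exceed $\pi$, so the sum is reduced by $2\pi$, giving $\Delta_2=-1$. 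Summing over the diagonal entries gives $\Delta_2=r-(p-r)=2r-p$, as required.

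With \eqref{eq:pair-signature} established, the components of $\X_{p,2}$ are the preimages $\Phi^{-1}(\Herm(p)\times\HH_r)$ for $0\le r\le p$, giving $p+1$ of them. The component with $\Delta_2=2r-p$ is diffeomorphic to $\Herm(p)\times\HH_r$. Contracting the vector factor $\Herm(p)$ and applying Lemma~\ref{lem:Herm-components} shows it has the homotopy type of $\Gr_r(\C^p)$.

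The main obstacle is the careful bookkeeping of the branch cut in the scalar sign check. One must verify that the mod-$2\pi$ reduction of angles contributes exactly $2\pi$ per negative direction, and that this matches the sign convention in \eqref{eq:Delta-K}, where $K$ counts the wraps. Everything else is formal from the lemmas already proved.
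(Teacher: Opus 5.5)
Your proposal is correct and follows essentially the same route as the paper: Cayley coordinates, the equivalence $AB\in\E_p \iff H+K\in\Herm(p)^\times$ from \eqref{eq:cayley-product}, the linear change of variables $(H,K)\mapsto(H,H+K)$, local constancy of both sides on the path-connected pieces $\Herm(p)\times\HH_r$, and evaluation at one diagonal point. The only difference is the test point. The paper takes $H=K=\diag(I_r,-I_{p-r})$, so that $A=B=\diag(iI_r,-iI_{p-r})$ and $AB=-\Id$, and then computes each $\rho$ directly. Your choice $H=K=\tfrac12\diag(I_r,-I_{p-r})$ requires the wrap-around computation, which you carry out correctly.
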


\begin{proof}
We first describe the displayed map and its inverse more explicitly.
Write
$
H=h(A)$, $K=h(B).
$
Since \(h:\mathcal E_p\to\Herm(p)\) is a diffeomorphism with inverse
\(C\), every pair \((A,B)\in\mathcal E_p^2\) can be written uniquely
as
$
A=C(H),$ $B=C(K)
$
for some \(H,K\in\Herm(p)\).

By \eqref{eq:cayley-product}, the matrix
$
C(H)C(K)-I_p
$
is invertible if and only if \(H+K\) is invertible. Since
\(A=C(H)\) and \(B=C(K)\), this says precisely that $AB\in\mathcal E_p$ if and only if $H+K\in\Herm(p)^\times$.
One can check directly that
\[
\Phi^{-1}(H,S)=\bigl(C(H),C(S-H)\bigr).
\]
Since \(h\), \(C\), and matrix addition are smooth,
both \(\Phi\) and \(\Phi^{-1}\) are smooth. Therefore \(\Phi\) is a
diffeomorphism.

The first factor \(\Herm(p)\) is a real vector space and is therefore
path connected. By Lemma~\ref{lem:Herm-components}, the path
components of \(\Herm(p)^\times\) are
$
\HH_r
$, $0\leq r\leq p$.
It follows that the path components of \(\X_{p,2}\) are precisely
\(
\Phi^{-1}\bigl(\Herm(p)\times\HH_r\bigr),\, 0\leq r\leq p.
\)

It remains to determine the value of
\[
\Delta_2(A,B)=\rho(A)+\rho(B)-\rho(AB)
\]
on the component corresponding to \(\HH_r\). By
Lemma~\ref{lem:discrete}, \(\Delta_2\) is locally constant. Since
each of the above components is path-connected, \(\Delta_2\) is
constant on each component. We may therefore compute its value at
one convenient point.

Fix \(r\) and let
\(
J_r=\diag(I_r,-I_{p-r}).
\)
Choose
\(
h(A)=h(B)=J_r.
\)
Equivalently, in the coordinates \((H,S)\) introduced above, this
means
\(
H=J_r,S=h(A)+h(B)=2J_r.
\)
The matrix \(2J_r\) has \(r\) positive and \(p-r\) negative
eigenvalues, so it belongs to \(\HH_r\). Since
\(
C(1)=i,\,C(-1)=-i,
\)
the corresponding matrices are
\(
A=B=C(J_r)
 =\diag(iI_r,-iI_{p-r}).
\)
Consequently,
\(
AB=-I_p.
\)

Recall that an eigenvalue \(e^{i\theta}\), with
\(0<\theta<2\pi\), contributes
\(
1-\frac{\theta}{\pi}
\)
to \(\rho\). Hence \(i=e^{i\pi/2}\) contributes \(1/2\), whereas
\(-i=e^{3i\pi/2}\) contributes \(-1/2\). It follows that
\(
\rho(A)=\rho(B)
 =\frac{r}{2}-\frac{p-r}{2}
 =r-\frac{p}{2}.
\)
Every eigenvalue of \(-I_p\) has argument \(\pi\), so
\(
\rho(-I_p)=0.
\)
Therefore
\[
\Delta_2(A,B)
 =\rho(A)+\rho(B)-\rho(AB)
 =2\left(r-\frac{p}{2}\right)
 =2r-p.
\]

On the other hand, every matrix \(S\in\HH_r\) has \(r\) positive
and \(p-r\) negative eigenvalues. Its signature is therefore
\(
\sign(S)=r-(p-r)=2r-p.
\)
Thus \(\Delta_2\) and
\(\sign\bigl(h(A)+h(B)\bigr)\) have the same value on the component
corresponding to \(\HH_r\). We conclude that
\[
\rho(A)+\rho(B)-\rho(AB)
 =\sign\bigl(h(A)+h(B)\bigr)
\]
for every \((A,B)\in \X_{p,2}\), proving
\eqref{eq:pair-signature}.

Finally, the component corresponding to \(\HH_r\) is diffeomorphic
to
\(
\Herm(p)\times\HH_r.
\)
The vector space \(\Herm(p)\) contracts to \(0\), while
Lemma~\ref{lem:Herm-components} shows that \(\HH_r\) strongly
deformation retracts onto \(\Gr_r(\C^p)\). Hence this component has
the homotopy type of \(\Gr_r(\C^p)\).
\end{proof}

\begin{remark}\label{rem:explicit-pair-path}
The proposition also supplies a concrete path between any two pairs with the same $\Delta_2$. Put $H_\nu=h(A^\nu)$ and $S_\nu=h(A^\nu)+h(B^\nu)$ for $\nu=0,1$. Choose spectral decompositions $S_\nu=U_\nu D_\nu U_\nu^*$ with the positive diagonal entries first. Choose a Hermitian $T$ with $e^{iT}=U_0^*U_1$, and set
\[
 \begin{split}
 S(t)&=U_0e^{itT}((1-t)D_0+tD_1)e^{-itT}U_0^*,\\
 H(t)&=(1-t)H_0+tH_1,\\
 A(t)&=C(H(t)),\qquad B(t)=C(S(t)-H(t)).
 \end{split}
\]
The matching signs of the diagonal entries guarantee that $S(t)$ is invertible. The Cayley identity gives $A(t),B(t),A(t)B(t)\in\E_p$, with the required endpoints. A Hermitian logarithm $T$ exists by the spectral theorem for unitary matrices; no continuous choice as the endpoints vary is needed.
\end{remark}

For the pair of pants, the third holonomy is $(AB)^{-1}$. Hence $\boldsymbol{\rho}=\Delta_2$, and its component with $k=2p-r$ has homotopy type $\Gr_r(\C^p)$. In particular, in rank two the three components have $\boldsymbol{\rho}=-2,0,2$; the middle component has homotopy type $\mathbb{CP}^1$, while the other two are contractible.

\begin{corollary}\label{cor:pants-quotient}
Each component of $\Srep_{0,3}(p)/\U(p)$ is contractible.
\end{corollary}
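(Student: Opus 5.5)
Via the proof of Proposition~\ref{prop:pair}, $\Srep_{0,3}(p)$ is identified with $\X_{p,2}$ by $\phi\mapsto(\phi(c_1),\phi(c_2))$. The map $\Phi$ then identifies it with $\Herm(p)\times\Herm(p)^\times$ through $(A,B)\mapsto(H,S)=(h(A),h(A)+h(B))$. The first step is to check that $\Phi$ intertwines simultaneous conjugation with the action $U\cdot(H,S)=(UHU^*,USU^*)$. This follows because $h(UAU^*)=Uh(A)U^*$, and it means the component with $\Delta_2=2r-p$ has quotient homeomorphic to $(\Herm(p)\times\HH_r)/\U(p)$ with its quotient topology.

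The plan is to contract this quotient by a $\U(p)$-equivariant deformation of $\Herm(p)\times\HH_r$ onto a single orbit. First, the linear homotopy $(H,S)\mapsto((1-t)H,S)$ is equivariant, stays inside $\Herm(p)\times\HH_r$, and ends in $\{0\}\times\HH_r$. Second, the retraction $F_t(S)=(1-t)S+t\sgn(S)$ from Lemma~\ref{lem:Herm-components} is equivariant, since $\sgn(USU^*)=U\sgn(S)U^*$. It deforms $\{0\}\times\HH_r$ onto $\{0\}\times\mathcal J_r$, and $\mathcal J_r\cong\Gr_r(\C^p)$ is a single $\U(p)$-orbit. Concatenating the two gives an equivariant homotopy $G_t$ from the identity to a map whose image is one orbit, which is a point $*$ in the quotient.

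Equivariant homotopies descend to the quotient: $G\colon(\Herm(p)\times\HH_r)\times[0,1]\to\Herm(p)\times\HH_r$ induces a map $\bar G$ on $(X/\U(p))\times[0,1]$. The one point needing care is continuity of $\bar G$. The product of a quotient map by $[0,1]$ is again a quotient map, because $[0,1]$ is locally compact Hausdorff. Alternatively, the quotient map by a compact group action is open, and open surjections are preserved under products with $[0,1]$. Hence $\bar G$ is continuous, and it contracts $X/\U(p)$ to $*$.

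I do not expect a serious obstacle here. The main points to verify are that the identification $\Srep_{0,3}(p)\cong\X_{p,2}$ respects conjugation, that $\sgn$ is equivariant, and the quotient-topology argument in the previous paragraph. Since conjugation preserves $\Delta_2$, the components of the quotient are exactly the images of the $p+1$ components, so each of them is contractible.
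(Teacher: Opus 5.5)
Your argument is correct and is essentially the paper's: both construct a $\U(p)$-equivariant deformation, driven by $\sgn(S)$, of each component onto a single conjugation orbit, and then let it descend to the quotient. The differences are minor. The paper deforms $h(A)$ and $h(B)$ simultaneously toward $\sgn(h(A)+h(B))$ and ends on the orbit of $(iJ_r,iJ_r)$, whereas you first shrink $H$ to $0$ and then retract $S$, ending on the orbit of $(-\Id,iJ_r)$; you also justify continuity of the descended homotopy, which the paper takes for granted.
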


\begin{proof}
Fix the component of \(\X_{p,2}\) corresponding to
\(
\HH_r
\)
We construct a strong deformation retraction of this component onto
a single \(\U(p)\)-orbit, and we do so equivariantly with respect to
simultaneous conjugation.

Let \((A,B)\) belong to this component and write
$H=h(A)$, $K=h(B)$, and $S=H+K$.
By \eqref{eq:cayley-product}, the condition \(AB\in\E_p\) is
equivalent to the invertibility of \(S=H+K\). Moreover, the component
under consideration is characterized by
$
n_+(S)=r.
$

Define
$
J=\sgn(S)=S(S^2)^{-1/2}.
$
If
$
S=U\diag(\lambda_1,\ldots,\lambda_p)U^*
$
is a spectral decomposition, then
$
J
 =U\diag\bigl(
 \sgn(\lambda_1),\ldots,\sgn(\lambda_p)
 \bigr)U^*.
$
We now deform \(H\) and \(K\) simultaneously toward \(J\). Set
\[
H_t=(1-t)H+tJ,\qquad
K_t=(1-t)K+tJ,
\qquad 0\leq t\leq1,
\]
and define
\[
\mathcal R_t(A,B)
 =\bigl(C(H_t),C(K_t)\bigr).
\]
Since \(H_t\) and \(K_t\) are Hermitian, both \(C(H_t)\) and
\(C(K_t)\) belong to \(\E_p\). It remains to check that their product
also belongs to \(\E_p\).

We have
$
H_t+K_t=(1-t)S+2tJ.
$
Because \(J=\sgn(S)\) is a function of \(S\), the matrices \(S\) and
\(J\) have the same eigenspaces.Therefore \(H_t+K_t\) remains invertible and has exactly
\(r\) positive eigenvalues throughout the deformation.
It now follows from \eqref{eq:cayley-product} that
\(
C(H_t)C(K_t)\in\E_p
\)
for every \(t\). Thus \(\mathcal R_t(A,B)\) remains in the same
component of \(\X_{p,2}\).

At \(t=0\), we recover the original pair:
$
\mathcal R_0(A,B)
 =\bigl(C(H),C(K)\bigr)=(A,B).
$
At \(t=1\), we have \(H_1=K_1=J\), and hence
$
\mathcal R_1(A,B)=\bigl(C(J),C(J)\bigr)=(iJ,iJ).
$
Let
\[
\mathcal O_r
 =
 \bigl\{(iJ,iJ):
 J=J^*=J^{-1},\ n_+(J)=r\bigr\}.
\]
The preceding construction shows that the deformation ends in
\(\mathcal O_r\). Moreover, it fixes every point of \(\mathcal O_r\).
Indeed, if \((A,B)=(iJ_0,iJ_0)\in\mathcal O_r\), then
$
H=h(A)=J_0,$ $K=h(B)=J_0,
$
and hence
$
\sgn(H+K)=\sgn(2J_0)=J_0.
$
It follows that \(H_t=K_t=J_0\) for every \(t\), so
$
\mathcal R_t(iJ_0,iJ_0)=(iJ_0,iJ_0).
$
Therefore \(\mathcal R_t\) is a strong deformation retraction onto
\(\mathcal O_r\).

We next verify that the deformation is compatible with simultaneous
unitary conjugation. This simply means that changing the orthonormal
basis before applying the deformation gives the same result as changing
the basis afterward.
Indeed, functional calculus is compatible with unitary conjugation:
for every \(U\in\U(p)\),
$
h(UAU^*)=Uh(A)U^*$, $C(UHU^*)=UC(H)U^*$
and
$
\sgn(USU^*)=U\sgn(S)U^*.
$
Thus, if
$
\mathcal R_t(A,B)=(A_t,B_t),
$
then
\[
\mathcal R_t(UAU^*,UBU^*)
=
(UA_tU^*,UB_tU^*).
\]
Hence \(\mathcal R_t\) is equivariant under simultaneous conjugation.
In particular, it induces a well-defined deformation on the quotient
\(\X_{p,2}/\U(p)\).

Finally, every Hermitian involution with \(r\) positive eigenvalues
is unitarily conjugate to
$
J_r=\diag(I_r,-I_{p-r}).
$
Hence
$
\mathcal O_r
 =\U(p)\cdot(iJ_r,iJ_r)
$ (action by conjugation)
is a single \(\U(p)\)-orbit, naturally identified with
$
\U(p)/\bigl(\U(r)\times\U(p-r)\bigr)
 =\Gr_r(\C^p).
$
Since the deformation is equivariant, it descends to the quotient.
The whole quotient component is thereby deformed onto
\(\mathcal O_r/\U(p)\), which consists of a single point. Therefore
each component of \(\X_{p,2}/\U(p)\) is contractible.

For the pair of pants, the identification
\(
\X_{p,2}\to S_{0,3}(p)
\) with \(
(A,B)\mapsto\bigl(A,B,(AB)^{-1}\bigr),
\)
is equivariant under simultaneous conjugation. Hence the same
conclusion holds for every component of
\(S_{0,3}(p)/\U(p)\).
\end{proof}

\section{Arbitrary products: an explicit deformation argument}\label{sec:products}

We now prove Theorem~\ref{thm:products-intro}. The case $m=1$ is immediate: $\X_{p,1}=\E_p$ is contractible and $\Delta_1=0$. Assume $m\geq2$.

\subsection{The locus with admissible partial products}

For a tuple in $\X_{p,m}$, let $P_j=A_1\cdots A_j$ and consider the open subset
\[
 \mathcal R_{p,m}=\{(A_1,\ldots,A_m)\in\X_{p,m}:
                         P_j\in\E_p\text{ for }1\leq j\leq m\}.
\]
The extra conditions only concern $P_2,\ldots,P_{m-1}$. On this subset put
\begin{equation}\label{eq:prefix-coordinates}
 M_j=h(P_j),\qquad D_j=M_{j-1}-M_j\quad(2\leq j\leq m).
\end{equation}

\begin{lemma}\label{lem:prefix-chart}
The coordinates $(M_1,D_2,\ldots,D_m)$ define a diffeomorphism
\[
 \Psi_m:\mathcal R_{p,m}\longrightarrow \Herm(p)\times(\Herm(p)^{\times})^{m-1}.
\]
In particular, its path components are indexed by vectors
$\mathbf q=(q_2,\ldots,q_m)\in\{0,\ldots,p\}^{m-1}$, where $q_j=n_-(D_j)$.
\end{lemma}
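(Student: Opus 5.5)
We prove Lemma~\ref{lem:prefix-chart} by writing down an explicit inverse. The map $\Psi_m$ is built from $h$ and subtraction, so it is smooth. The inverse will be an iterated use of the two-factor mechanism from Proposition~\ref{prop:pair}, with one Cayley coordinate for each prefix product. The key input is identity \eqref{eq:cayley-difference} of Lemma~\ref{lem:cayley-identities}, applied to consecutive prefix products.

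\emph{Step 1: $\Psi_m$ lands in the target.} On $\mathcal R_{p,m}$ every prefix $P_j$ lies in $\E_p$, so $M_j=h(P_j)\in\Herm(p)$ is defined and smooth. We need $D_j=M_{j-1}-M_j$ to be invertible. Since $A_j=P_{j-1}^{-1}P_j$, we have
\[
A_j-\Id=P_{j-1}^{-1}(P_j-P_{j-1}).
\]
Thus $A_j\in\E_p$ holds exactly when $C(M_j)-C(M_{j-1})$ is invertible. By \eqref{eq:cayley-difference} this holds exactly when $M_{j-1}-M_j$ is invertible. This equivalence runs in both directions, which is what makes the chart work.

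\emph{Step 2: the inverse.} Given $(M_1,D_2,\ldots,D_m)$, set $M_j=M_1-D_2-\cdots-D_j$ and $P_j=C(M_j)$. Then define $A_1=P_1$ and $A_j=P_{j-1}^{-1}P_j$ for $j\ge2$. Each $P_j$ lies in $\E_p$ because $C$ maps into $\E_p$. Each $A_j$ is unitary, and by Step 1 the invertibility of $D_j$ gives $A_j\in\E_p$. The products telescope, so $A_1\cdots A_j=P_j$. All prefixes, including the full product $P_m$, therefore lie in $\E_p$, and the tuple lies in $\mathcal R_{p,m}$. This map is smooth.

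It is a two-sided inverse of $\Psi_m$ because the tuple $(A_1,\ldots,A_m)$ and the prefix sequence $(P_1,\ldots,P_m)$ determine each other via $A_j=P_{j-1}^{-1}P_j$. Also $h$ and $C$ are mutually inverse, so the $M_j$, equivalently $(M_1,D_2,\ldots,D_m)$, determine the $P_j$. Hence $\Psi_m$ is a diffeomorphism.

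\emph{Step 3: components.} The factor $\Herm(p)$ is path connected. By Lemma~\ref{lem:Herm-components}, the path components of $\Herm(p)^\times$ are the sets $\HH_r$. Indexing these by $n_-=p-r$ instead of $n_+$ is only a relabeling. A product of spaces has as path components the products of the factors' path components, so the components of $\mathcal R_{p,m}$ are exactly the preimages of $\Herm(p)\times\prod_j\{n_-(D_j)=q_j\}$. There is one for each $\mathbf q\in\{0,\ldots,p\}^{m-1}$, and each is nonempty: take $D_j=\diag(I_{p-q_j},-I_{q_j})$.

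The only delicate point is the if-and-only-if in Step 1, since the exclusion of eigenvalue $1$ for each $A_j$ must match exactly the invertibility of $D_j$. The resolvent form of \eqref{eq:cayley-difference} provides precisely this.
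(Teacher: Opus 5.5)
Your proposal is correct and follows the paper's proof essentially step for step. Both arguments rest on three ingredients: the factorization $A_j-\Id=P_{j-1}^{-1}(P_j-P_{j-1})$ combined with \eqref{eq:cayley-difference}, the explicit telescoping inverse $M_j=M_1-\sum_{r\le j}D_r$, $P_j=C(M_j)$, $A_j=P_{j-1}^{-1}P_j$, and Lemma~\ref{lem:Herm-components} to identify the components. Your explicit check that every piece is nonempty, by taking $D_j=\diag(I_{p-q_j},-I_{q_j})$, is a small addition that the paper leaves implicit.
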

\begin{proof}
Since $A_j=P_{j-1}^{-1}P_j$ for $j\geq2$,
\[
 A_j-\Id=P_{j-1}^{-1}(P_j-P_{j-1}).
\]
Equation~\eqref{eq:cayley-difference} therefore shows that $A_j-\Id$ is invertible if and only if $M_{j-1}-M_j=D_j$ is invertible. 
Hence, $A_j\in \mathcal{E}_p$ if and only if $D_j\in \Herm(p)^{\times}.$
Conversely, given arbitrary $M_1\in\Herm(p)$ and invertible Hermitian matrices $D_2,\ldots,D_m$, define
\[
 M_j=M_1-\sum_{r=2}^jD_r,\quad
 P_j=C(M_j),\quad A_1=P_1,\quad A_j=P_{j-1}^{-1}P_j.
\]
These formulas produce a tuple in $\mathcal R_{p,m}$ and invert \eqref{eq:prefix-coordinates}. 
Both directions
are smooth, since they use only the smooth maps \(h\) and \(C\),
matrix addition, multiplication, and inversion. Hence $\Psi_m$
is a diffeomorphism.

Finally, \(\Herm(p)\) is a real vector space and is therefore
path connected. By Lemma~\ref{lem:Herm-components}, a path
component of \(\Herm(p)^\times\) is determined by the number of
positive, or equivalently negative, eigenvalues.

Following our convention, put \(q_j=n_-(D_j)\). Since \(D_j\)
is invertible, it then has \(p-q_j\) positive eigenvalues.
Thus, for a fixed vector
\(\mathbf q=(q_2,\ldots,q_m)\in\{0,\ldots,p\}^{m-1}\),
the corresponding coordinate subset is
\(
\Herm(p)\times
\HH_{p-q_2}\times\cdots\times\HH_{p-q_m}.
\)
Every factor is path connected, so this product is path connected.
Different choices of \(\mathbf q\) cannot be joined by a path,
because the number of negative eigenvalues of an invertible
Hermitian matrix cannot change along such a path.
Therefore the path components of \(\mathcal R_{p,m}\) are exactly
\[
\mathcal R_{\mathbf q}
 :=
\Psi_m^{-1}\!\left(
\Herm(p)\times
\HH_{p-q_2}\times\cdots\times\HH_{p-q_m}
\right),
\]
as claimed.
\end{proof}

\subsection{Diagonal representatives and the value of the invariant}

Fix $\eps=\pi/(2m)$. For a given vector $\mathbf q$, choose numbers
\[
 c_{j\ell}\in\{0,1\},\qquad
 \sum_{\ell=1}^p c_{j\ell}=q_j,
 \quad 2\leq j\leq m,\quad 1\leq\ell\leq p.
\]
Define a diagonal tuple by
\begin{equation}\label{eq:binary-representatives}
 A_1=-\Id,\quad
 A_j=\diag(e^{i\theta_{j1}},\ldots,e^{i\theta_{jp}}),\quad
 \theta_{j\ell}=\begin{cases}
 \eps&c_{j\ell}=0,\\
 2\pi-\eps&c_{j\ell}=1.
 \end{cases}
\end{equation}
Since \(c_{r\ell}\in\{0,1\}\), the definition of
\(\theta_{r\ell}\) gives
$
e^{i\theta_{r\ell}}
 =e^{i\eps(1-2c_{r\ell})}.
$
Thus each factor contributes an angle increment of either
\(\eps\) or \(-\eps\). Since \(A_1=-\Id\) and all the matrices
are diagonal, the \(\ell\)th diagonal entry of
\(P_j=A_1\cdots A_j\) is \(e^{i\varphi_{j\ell}}\), where
\begin{equation}\label{eq:prefix-angles}
\varphi_{j\ell}
 =\pi+\eps\sum_{r=2}^j(1-2c_{r\ell}).
\end{equation}
For \(j=1\), the sum is empty and \(\varphi_{1\ell}=\pi\).
Each summand is either \(1\) or \(-1\), so
\[
|\varphi_{j\ell}-\pi|
 \le (j-1)\eps
 \le (m-1)\eps
 <\frac{\pi}{2}.
\]
Hence \(\pi/2<\varphi_{j\ell}<3\pi/2\). In particular,
\(\varphi_{j\ell}\) already lies in the chosen argument interval
\((0,2\pi)\), so no adjustment by a multiple of \(2\pi\) is needed.
Every eigenvalue of \(P_j\) therefore lies in the open left
semicircle and is different from \(1\). Consequently,
\(P_j\in\E_p\) for every \(1\leq j\leq m\).

The $\ell$th entry of $D_j$ is
\[
 \cot(\varphi_{j-1,\ell}/2)-\cot(\varphi_{j\ell}/2).
\]
The function $\cot(x/2)$ is strictly decreasing on $(0,2\pi)$. Thus this entry is positive when $c_{j\ell}=0$ and negative when $c_{j\ell}=1$. The tuple lies in $\mathcal R_{\mathbf q}$.

We compute \(\Delta_m\) at the diagonal representative constructed
above. For each \(1\leq\ell\leq p\), let
$
d_\ell=\sum_{j=2}^m c_{j\ell}.
$
Thus \(d_\ell\) counts the factors whose \(\ell\)th diagonal entry
has chosen argument \(2\pi-\eps\), rather than \(\eps\).

The two possible values of \(\theta_{j\ell}\) can be written in
the single formula
$
\theta_{j\ell}
 =\eps(1-2c_{j\ell})+2\pi c_{j\ell}.
$
Using \eqref{eq:prefix-angles}, we therefore obtain
\[
\begin{aligned}
\pi+\sum_{j=2}^m\theta_{j\ell}
 =\pi+\eps\sum_{j=2}^m(1-2c_{j\ell})
   +2\pi\sum_{j=2}^m c_{j\ell}=\varphi_{m\ell}+2\pi d_\ell.
\end{aligned}
\]
Here the initial angle \(\pi\) comes from \(A_1=-\Id\), and
\(\varphi_{m\ell}\in(0,2\pi)\) is the chosen argument of the
\(\ell\)th diagonal entry of \(P_m\). Thus each occurrence of
\(c_{j\ell}=1\) contributes an extra \(2\pi\) to the sum of the
factor arguments.

Since all the matrices are diagonal, summing these differences
over \(\ell\) gives
\[
\begin{aligned}
2\pi K
 &=\sum_{j=1}^m\tr\Theta(A_j)-\tr\Theta(P_m)\\
 &=\sum_{\ell=1}^p
  (
   \pi+\sum_{j=2}^m\theta_{j\ell}-\varphi_{m\ell}
   )=2\pi\sum_{\ell=1}^p d_\ell,
\end{aligned}
\]
where \(K\) is the integer appearing in \eqref{eq:Delta-K}.
Moreover, by the choice of the entries \(c_{j\ell}\),
\[
K
 =\sum_{\ell=1}^p d_\ell
 =\sum_{j=2}^m\sum_{\ell=1}^p c_{j\ell}
 =\sum_{j=2}^m q_j.
\]

Equation~\eqref{eq:Delta-K} now determines \(\Delta_m\) at this
representative. Since \(\mathcal R_{\mathbf q}\) is path-connected
and \(\Delta_m\) is locally constant, the same value holds at every
point of \(\mathcal R_{\mathbf q}\). Consequently,
\begin{equation}\label{eq:Delta-on-piece}
\Delta_m=(m-1)p-2\sum_{j=2}^m q_j
\quad\text{on }\mathcal R_{\mathbf q}.
\end{equation}

\subsection{Joining the pieces with the same total index}

\begin{lemma}\label{lem:transfer}
Suppose $r\neq s$, $q_r<p$ and $q_s>0$. Let $\mathbf q'$ be obtained from $\mathbf q$ by replacing $q_r$ by $q_r+1$ and $q_s$ by $q_s-1$. Then $\mathcal R_{\mathbf q}$ and $\mathcal R_{\mathbf q'}$ lie in the same path component of $\X_{p,m}$.
\end{lemma}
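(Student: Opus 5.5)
The idea is to pass from $\mathcal R_{\mathbf q}$ to $\mathcal R_{\mathbf q'}$ along a path of commuting diagonal matrices. Along this path the factors and the total product stay in $\E_p$, while an intermediate partial product is allowed to cross the eigenvalue $1$. Since $\mathcal R_{\mathbf q}$ and $\mathcal R_{\mathbf q'}$ are path connected by Lemma~\ref{lem:prefix-chart}, it is enough to join one point of each. For these points I will use the diagonal representatives \eqref{eq:binary-representatives}.

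\emph{Choice of representatives.} Fix $\mathbf q$ and indices $r\neq s$ in $\{2,\ldots,m\}$ with $q_r<p$ and $q_s>0$. I choose the binary entries $c_{j\ell}$ so that there is a single column $\ell_0$ with $c_{r\ell_0}=0$ and $c_{s\ell_0}=1$. This is possible because each row $j$ only has to contain $q_j$ ones, and the rows can be arranged independently. Row $r$ has a zero somewhere since $q_r<p$, and row $s$ has a one somewhere since $q_s>0$; permute the entries within each row so that these sit in column $\ell_0$. Let $c'$ be obtained from $c$ by swapping the two entries in column $\ell_0$, so that $c'_{r\ell_0}=1$, $c'_{s\ell_0}=0$, and all other entries are unchanged. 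Then $c'$ has row sums $\mathbf q'$. By the computation preceding \eqref{eq:Delta-on-piece}, the diagonal tuples built from $c$ and from $c'$ lie in $\mathcal R_{\mathbf q}$ and $\mathcal R_{\mathbf q'}$ respectively.

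\emph{The path.} For $t\in[0,1]$, change only the $(\ell_0,\ell_0)$ entries of $A_r$ and $A_s$, setting
\[
 \theta_{r\ell_0}(t)=\eps+t(2\pi-2\eps),\qquad
 \theta_{s\ell_0}(t)=2\pi-\eps-t(2\pi-2\eps),
\]
and keep every other entry and every other factor fixed.
\begin{itemize}
\item Both angles stay in $[\eps,2\pi-\eps]\subset(0,2\pi)$, so $A_r(t),A_s(t)\in\E_p$ for all $t$.
\item All matrices are diagonal and commute, and $\theta_{r\ell_0}(t)+\theta_{s\ell_0}(t)=2\pi$. Hence the $\ell_0$ entry of $A_1\cdots A_m$ equals $e^{i(\pi+\sum_{j\neq r,s}\theta_{j\ell_0})}$ for all $t$, independent of $t$. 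The other entries of the product are unchanged as well.
\item Therefore the total product $P_m$ is constant along the path, and it lies in $\E_p$ because it did at $t=0$.
\end{itemize}
So the path stays in $\X_{p,m}$. At $t=0$ it is the $c$-representative and at $t=1$ the $c'$-representative. This proves the lemma.

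The only delicate point is that intermediate partial products $P_j$ with $\min(r,s)\leq j<\max(r,s)$ do hit eigenvalue $1$ along the way. For instance, the $\ell_0$ argument of such a $P_j$ sweeps through a full turn of length about $2\pi-2\eps$. This is exactly why the path leaves $\mathcal R_{p,m}$, and it is permitted, because $\X_{p,m}$ constrains only the factors and the full product. As a consistency check, $\Delta_m$ takes the same value at both endpoints, since $\sum_j q_j=\sum_j q'_j$, in agreement with \eqref{eq:Delta-on-piece}.
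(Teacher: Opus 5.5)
Your proposal is correct and matches the paper's proof: the same binary representatives with $c_{r\ell_0}=0$ and $c_{s\ell_0}=1$ in a common column, the same linear exchange of the two arguments with constant sum $2\pi$ so that the total product is frozen, and the same appeal to path connectedness of $\mathcal R_{\mathbf q}$ and $\mathcal R_{\mathbf q'}$. Your side remark is stronger than the paper's: you say the partial products $P_j$ with $\min(r,s)\leq j<\max(r,s)$ actually do hit eigenvalue $1$, where the paper only says they may. That claim is accurate, since their starting arguments lie in $(\pi/2,3\pi/2)$ and sweep an arc of length $2\pi-2\eps$, but it is not needed for the lemma.
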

\begin{proof}
We construct a path in \(\X_{p,m}\) from a diagonal representative
of \(\mathcal R_{\mathbf q}\) to a diagonal representative of
\(\mathcal R_{\mathbf q'}\).
Recall that the binary entries defining a representative in
\eqref{eq:binary-representatives} satisfy
$
c_{j\ell}\in\{0,1\},
$ $
\sum_{\ell=1}^p c_{j\ell}=q_j.
$
We may choose them so that
$
c_{r1}=0,$ $c_{s1}=1.
$
Indeed, \(q_r<p\) allows us to place all \(q_r\) ones in the
remaining \(p-1\) positions of row \(r\). Similarly, \(q_s>0\)
allows us to place one of the \(q_s\) ones in the first position
of row \(s\). Since \(r\neq s\), these two choices can be made
independently.

For this representative, the first diagonal arguments in factors
\(r\) and \(s\) are
$
\theta_{r1}(0)=\eps,$ $
\theta_{s1}(0)=2\pi-\eps.
$
We exchange these two arguments continuously by setting
\[
\theta_{r1}(t)=\eps+(2\pi-2\eps)t,\quad
\theta_{s1}(t)=2\pi-\eps-(2\pi-2\eps)t,
\quad 0\leq t\leq1.
\]
Let \(A_j(t)\) be the resulting diagonal matrices, with all other
entries kept fixed.

We verify that this path remains in \(\X_{p,m}\). First, both
varying arguments lie in
$
[\eps,2\pi-\eps]\subset(0,2\pi).
$
Hence neither of the corresponding eigenvalues equals \(1\).
All other eigenvalues are unchanged, so
\(A_j(t)\in\E_p\) for every \(j\) and \(t\).
Second, the sum of the two varying arguments is constant:
$
\theta_{r1}(t)+\theta_{s1}(t)=2\pi.
$
Thus
$
e^{i\theta_{r1}(t)}e^{i\theta_{s1}(t)}=1
$
throughout the path. Since all matrices remain diagonal, their
product is computed separately in each diagonal position.
The first diagonal entry of the total product is therefore
constant, and all its other diagonal entries are unchanged.
Consequently,
$
A_1(t)\cdots A_m(t)=A_1(0)\cdots A_m(0)\in\E_p.
$
Together with the factor conditions, this proves that the entire
path lies in \(\X_{p,m}\).

At \(t=1\), the two arguments have been exchanged:
$
\theta_{r1}(1)=2\pi-\eps,$ $
\theta_{s1}(1)=\eps.
$
The endpoint is therefore another binary representative, obtained
by replacing \(c_{r1}=0\) with \(1\) and \(c_{s1}=1\) with \(0\).
Its row sums are
$
q'_r=q_r+1,$ $q'_s=q_s-1$,
$q'_j=q_j$$(j\neq r,s)$.
By the preceding construction of binary representatives, this
endpoint belongs to \(\mathcal R_{\mathbf q'}\).

Finally, both \(\mathcal R_{\mathbf q}\) and
\(\mathcal R_{\mathbf q'}\) are path connected. An arbitrary point
of the first piece can be joined to the chosen initial
representative within that piece, and the terminal representative
can be joined to an arbitrary point of the second piece within
the second piece. Concatenating these paths with the path
constructed above proves that the two pieces lie in the same
path component of \(\X_{p,m}\).

The connecting path need not remain in \(\mathcal R_{p,m}\):
some intermediate products may acquire the eigenvalue \(1\).
This is allowed in \(\X_{p,m}\), which requires only the individual
factors and the total product to belong to \(\E_p\).
\end{proof}

If $\mathbf q$ and $\mathbf q'$ have the same sum, they are related by finitely many transfers of this form. To see this, whenever they differ choose $r$ with $q_r<q'_r$ and $s$ with $q_s>q'_s$. These inequalities imply $q_r<p$ and $q_s>0$. A transfer decreases $\sum_j|q_j-q'_j|$ by two. Repeating the procedure terminates at $\mathbf q'$. When there is only one coordinate, equality of sums already means equality of vectors. Thus all the pieces in \eqref{eq:Delta-on-piece} with a given value of $\Delta_m$ belong to one path component of the full space.

\subsection{Reaching the partial-product locus}

\begin{lemma}\label{lem:perturb}
Every point of $\X_{p,m}$ can be joined by a path in $\X_{p,m}$ to a point of $\mathcal R_{p,m}$.
\end{lemma}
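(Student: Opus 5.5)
The plan is to move the tuple by \emph{scalar} rotations whose total phase is zero, so that the total product never changes while the intermediate products are rotated off the eigenvalue $1$. Let $(A_1,\ldots,A_m)\in\X_{p,m}$ and $P_j=A_1\cdots A_j$. For real parameters $s_1,\ldots,s_{m-1}$, put $s_0=s_m=0$ and
\[
 A_j(\mathbf s)=e^{i(s_j-s_{j-1})}A_j,\qquad 1\leq j\leq m .
\]
Scalars are central, so the partial products are
\[
 P_j(\mathbf s)=e^{is_j}P_j .
\]
In particular $P_m(\mathbf s)=P_m\in\E_p$ for every $\mathbf s$, and $P_m$ is unchanged along any deformation of $\mathbf s$.

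\textbf{Factor condition.} Since $\E_p$ is open in $\U(p)$, there is $\delta>0$ such that $e^{i\alpha}A_j\in\E_p$ for all $j$ and all $|\alpha|<2\delta$. Hence if $|s_j|<\delta$ for every $j$, then each $A_j(\mathbf s)$ lies in $\E_p$. Consequently the tuple $(A_j(\mathbf s))_j$ lies in $\X_{p,m}$ for all $\mathbf s$ in the open cube $(-\delta,\delta)^{m-1}$.

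\textbf{Choosing the endpoint.} For each $1\leq j\leq m-1$, the matrix $e^{is_j}P_j$ has eigenvalue $1$ exactly when $e^{-is_j}$ is an eigenvalue of $P_j$. There are at most $p$ eigenvalues, so this excludes only finitely many values of $s_j$ in $(-\delta,\delta)$. We therefore choose $s_j^*\in(-\delta,\delta)$ avoiding these values for each $j$. For $j=m$ there is nothing to check, because $P_m\in\E_p$ already.

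\textbf{The path.} Take the straight path $\mathbf s(t)=t\mathbf s^*$ for $0\leq t\leq 1$. It stays in the cube, so it lies in $\X_{p,m}$ by the factor condition. At $t=0$ it is the original tuple. At $t=1$ it ends at a tuple whose partial products $P_1,\ldots,P_m$ all lie in $\E_p$, that is, at a point of $\mathcal R_{p,m}$. Intermediate partial products may hit the eigenvalue $1$ along the way, but this is permitted in $\X_{p,m}$.

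\textbf{Main point.} The only step that needs care is keeping the total product fixed while rotating the partial products. Central scalar phases with telescoping exponents handle this, and no genericity argument about real algebraic hypersurfaces is required.
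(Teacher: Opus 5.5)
Your argument is correct and is essentially the paper's proof: both rotate the partial products by central scalar phases and choose an endpoint angle outside the finitely many values at which some $P_j$ acquires eigenvalue $1$. The only difference is that the paper applies a single phase $e^{it\tau}$ to $A_1$ alone, so $P_m$ rotates too and $\delta$ must be small enough to keep $e^{iu}P_m$ in $\E_p$; your telescoping phases $e^{i(s_j-s_{j-1})}$ keep $P_m$ fixed, at the cost of using $m-1$ parameters instead of one.
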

\begin{proof}
Let $(A_1,\ldots,A_m)\in\X_{p,m}$, and write
$P_j=A_1\cdots A_j$. We will multiply $A_1$ by a small
scalar phase, which rotates every partial product by the same angle.
Since $A_1$ and $P_m$ have no eigenvalue equal to $1$, continuity
allows us to choose $0<\delta<2\pi$ such that $e^{iu}A_1,\ e^{iu}P_m\in\E_p$ for every $u\in[0,\delta]$.
Thus any rotation through an angle in this interval preserves
the required conditions on the first factor and the total product.

We next choose an angle that also works for all partial products
at the endpoint. For any eigenvalue $\lambda$ of any $P_j$,
the equation
$
 e^{i\tau}\lambda=1
$
has at most one solution in $(0,\delta)$, because $\delta<2\pi$.
Since $P_1,\ldots,P_m$ have only finitely many eigenvalues altogether,
only finitely many angles in $(0,\delta)$ are excluded.
Choose $\tau$ outside this finite set. Then $e^{i\tau}P_j\in\E_p$ for every $j=1,\ldots,m$.

Now define a path by
\[
 A_1(t)=e^{it\tau}A_1,\quad
 A_j(t)=A_j\quad(j\geq2),\quad 0\leq t\leq1.
\]
Its partial products satisfy
$
 P_j(t)=A_1(t)\cdots A_j(t)=e^{it\tau}P_j.
$
Because $0\leq t\tau<\delta$, our choice of $\delta$ ensures
that $A_1(t)$ and $P_m(t)$ remain in $\E_p$ throughout the path.
The other factors are unchanged and already belong to $\E_p$.
Hence the entire path lies in $\X_{p,m}$.

At $t=1$, our choice of $\tau$ ensures that every partial product
belongs to $\E_p$. Thus the endpoint lies in $\mathcal R_{p,m}$,
as required.
\end{proof}

\begin{proof}[Proof of Theorem~\ref{thm:products-intro}]
By Lemma~\ref{lem:perturb}, every point of $\X_{p,m}$ can be
joined to the partial-product locus $\mathcal R_{p,m}$ by a path
in $\X_{p,m}$. Since $\Delta_m$ is locally constant, its value
is unchanged along this path.

On $\mathcal R_{\mathbf q}$, formula~\eqref{eq:Delta-on-piece} gives
\[
 \Delta_m=(m-1)p-2\sum_{j=2}^m q_j.
\]
Every integer $K$ between $0$ and $(m-1)p$ can be written as a sum
of $m-1$ integers in $\{0,\ldots,p\}$. Thus the possible values
of $\Delta_m$ on $\mathcal R_{p,m}$, and hence on $\X_{p,m}$,
are exactly $(m-1)p-2K$ for $0\leq K\leq(m-1)p$.

Now take two points of $\X_{p,m}$ with the same value of $\Delta_m$,
and join them to points in $\mathcal R_{\mathbf q}$ and
$\mathcal R_{\mathbf q'}$, respectively. The displayed formula
implies that $\mathbf q$ and $\mathbf q'$ have the same sum.
Each piece is path connected, and successive applications of
Lemma~\ref{lem:transfer} connect the two pieces by paths in
$\X_{p,m}$. Concatenating these paths with the initial paths
connects the original two points. Hence every level set of
$\Delta_m$ is path connected.

Since $\Delta_m$ is continuous and has a discrete image, points
with different values cannot lie in the same connected component.
Therefore its level sets are precisely the connected components
of $\X_{p,m}$. The homotopy assertion for $m=2$ follows from
Proposition~\ref{prop:pair}.
\end{proof}

\begin{remark}
The proof provides a way to connect any two given points with the
same value of $\Delta_m$. However, the choices in this construction
are made separately for each pair of endpoints, and we have not
shown that they can be made continuously as the endpoints vary.
Thus these paths do not by themselves give a deformation retraction
of $\X_{p,m}$ onto its diagonal locus: such a retraction would require
a single continuous deformation defined on the whole space.
\end{remark}

\section{Surface group representations}\label{sec:surfaces}

\subsection{Genus zero}

If $g=0$ and $n=1$, the boundary is null-homotopic, so its holonomy is $\Id$ and $\Srep_{0,1}(p)$ is empty. If $n\geq2$, the relation determines the last boundary matrix:
$
 C_n=(C_1\cdots C_{n-1})^{-1}.
$
Using \eqref{eq:rho-inverse-sum}, we obtain a diffeomorphism
\begin{equation}\label{eq:planar-identification}
 \Srep_{0,n}(p)\cong\X_{p,n-1},\qquad
 \boldsymbol{\rho}=\Delta_{n-1}.
\end{equation}
Theorem~\ref{thm:products-intro} therefore gives
\[
 \boldsymbol{\rho}=(n-2)p-2K,\quad 0\leq K\leq(n-2)p,
 \qquad k=p+K.
\]
This proves the genus-zero case of Theorem~\ref{thm:main}, including the annulus: when $n=2$, the space is a copy of $\E_p$, and $k=p$.

\subsection{Connected commutator fibers}

For $g\geq1$, write
\[
 \mu_g:\U(p)^{2g}\longrightarrow\SU(p),\quad
 \mu_g(A_1,B_1,\ldots,A_g,B_g)=\prod_{r=1}^g[A_r,B_r].
\]

\begin{theorem}\label{thm:commutator}
The map $\mu_g$ is surjective, and every fiber is connected.
\end{theorem}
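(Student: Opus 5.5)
The plan is to reduce to the compact simply connected group $\SU(p)$, where the needed facts are classical, and then pass back to $\U(p)$ using the center. The map $\mu_g$ indeed lands in $\SU(p)$, since commutators have determinant one.

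\textbf{Surjectivity.} Already for $g=1$, every element of $\SU(p)$ is a commutator in $\SU(p)$. I would use the classical fact that a diagonal $D\in\SU(p)$ is conjugate to a product of a permutation-type element and its inverse conjugate. Concretely, write $D=\diag(d_1,\ldots,d_p)$ with $\prod d_\ell=1$. Choose $X=\diag(x_1,\ldots,x_p)$ with $x_\ell/x_{\ell+1}$ suitably determined by partial products of the $d_\ell$. Let $Y$ be the cyclic shift, scaled to have determinant one. Then $XYX^{-1}Y^{-1}$ is diagonal with entries $x_\ell x_{\sigma(\ell)}^{-1}$, and the constraint $\prod d_\ell=1$ makes the system solvable. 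For $g\ge2$, take the remaining handles trivial.

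\textbf{Connected fibers.} First I would treat the restriction $\mu_g^{\SU}:\SU(p)^{2g}\to\SU(p)$. Its fibers are connected by the Alekseev--Malkin--Meinrenken result on group-valued moment maps, \cite[Theorem~7.2]{AMM1998}, in the form stated in \cite[Fact~3]{HL2003}.

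To pass to $\U(p)$, write $\U(p)=(\SU(p)\times \U(1))/\mu_p$, or more concretely write each $A_r=\alpha_r A_r'$ and $B_r=\beta_r B_r'$ with $A_r',B_r'\in\SU(p)$ and scalars $\alpha_r,\beta_r\in \U(1)$. Scalars drop out of commutators, so $\mu_g(A,B)=\mu_g^{\SU}(A',B')$. Define
\[
\Pi:\U(1)^{2g}\times(\mu_g^{\SU})^{-1}(Z)\to\mu_g^{-1}(Z),\qquad (\alpha,\beta,A',B')\mapsto(\alpha_rA'_r,\beta_rB'_r)_r .
\]
This map is continuous and surjective, because every unitary matrix has a $p$th root of its determinant, giving such a factorization. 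Its source is a product of connected spaces, so the fiber $\mu_g^{-1}(Z)$ is connected.

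\textbf{Main obstacle.} The main obstacle is purely the citation of the AMM connectedness theorem. It requires simple connectivity, so it does not apply to $\U(p)$ directly. The covering trick above is what handles this, and I expect it to be unproblematic because the scalar factor $\U(1)^{2g}$ is connected.

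For surjectivity I could alternatively avoid explicit matrices: every element of a compact connected semisimple group is a commutator (Got\^o's theorem). I would cite that rather than compute.
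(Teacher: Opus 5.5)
Your proposal is correct and follows essentially the same route as the paper. For $\SU(p)$ you take connectedness of the fibers from \cite[Theorem~7.2]{AMM1998} via \cite[Fact~3]{HL2003}, and you pass to $\U(p)$ through the surjective scalar-multiplication map $\U(1)^{2g}\times(\mu_g^{\SU})^{-1}(Z)\to\mu_g^{-1}(Z)$; your diagonal-times-cyclic-shift commutator for surjectivity is also recorded in the paper, which additionally notes the trivial case $p=1$, where the fiber is the torus $\U(1)^{2g}$.
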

\begin{proof}
For a compact connected simply connected semisimple group $G$, surjectivity and connectedness of every fiber of the product-of-$g$-commutators map are stated in \cite[Fact~3]{HL2003}; the connectedness input is \cite[Theorem~7.2]{AMM1998}. Apply this theorem to $G=\SU(p)$ for $p\geq2$.

To pass to $\U(p)$, fix $Q\in\SU(p)$. Multiplication of each handle matrix by an arbitrary unit scalar defines a continuous map
\[
 (\mu_g^{\SU(p)})^{-1}(Q)\times\U(1)^{2g}
       \longrightarrow(\mu_g^{\U(p)})^{-1}(Q).
\]
It is surjective. Indeed, for each $U\in\U(p)$ choose $z\in\U(1)$ with $z^p=\det U$ and write $U=zV$ with $V\in\SU(p)$. Scalar factors disappear from commutators. These choices only establish pointwise surjectivity; no global choice of a $p$th root is asserted. The source is connected, so its image is connected. Surjectivity onto $\SU(p)$ follows already from the $\SU(p)$ theorem. For $p=1$, the target consists of one point and the fiber is the connected torus $\U(1)^{2g}$.
\end{proof}

For clarity, surjectivity alone also has a direct matrix proof. Write
\[
 Q=V\diag(q_1,\ldots,q_p)V^{-1},\qquad \prod_{\ell=1}^p q_\ell=1.
\]
Let $Se_\ell=e_{\ell+1}$ with cyclic indices, and choose unit scalars $d_\ell$ satisfying
$d_\ell/d_{\ell-1}=q_\ell$, where $d_0=d_p$. They exist because the product of the $q_\ell$ is one. For $D=\diag(d_1,\ldots,d_p)$,
\[
 [D,S]=\diag(q_1,\ldots,q_p).
\]
Conjugating by $V$ and setting the remaining handle pairs equal to the identity gives a preimage of $Q$. The connectedness of the entire fiber is the substantive classical theorem used above.

\subsection{Allowing the boundary matrices to vary}

\begin{lemma}\label{lem:closed-map}
Let $f:X\to Y$ be a continuous closed surjection. If $Y$ is connected and every fiber of $f$ is connected, then $X$ is connected.
\end{lemma}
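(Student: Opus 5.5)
Suppose, for contradiction, that $X=U\cup V$ with $U,V$ disjoint, nonempty and open; it suffices to show this is impossible. Both sets are then also closed, being complements of each other.

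First, each fiber $f^{-1}(y)$ is connected and is covered by the disjoint relatively open sets $U\cap f^{-1}(y)$ and $V\cap f^{-1}(y)$. Hence every fiber lies entirely in $U$ or entirely in $V$. Consequently $U=f^{-1}(f(U))$ and $V=f^{-1}(f(V))$, i.e.\ both are saturated.

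Next, since $f$ is surjective and every fiber lies in exactly one of $U$, $V$, we get $Y=f(U)\sqcup f(V)$, a disjoint union of two nonempty sets. Because $f$ is a closed map and $U$, $V$ are closed in $X$, both $f(U)$ and $f(V)$ are closed in $Y$. Each is the complement of the other, so each is also open. This gives a separation of $Y$, contradicting connectedness, and so $X$ is connected.

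There is no substantive obstacle here. The only point requiring care is the saturation step, which is exactly where connectedness of the fibers is used; surjectivity is what guarantees that $f(U)$ and $f(V)$ cover $Y$ and are disjoint.

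In the intended application (presumably $f$ the projection from $\Srep_{g,n}(p)$, or a $k$-level of it, to the space of boundary tuples), closedness will come from compactness of the source, or from properness of the map onto the relevant open locus. That, however, belongs to the use of the lemma, not to its proof.
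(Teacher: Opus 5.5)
Your proof is correct and follows the paper's argument essentially step for step: connectedness of the fibers forces each fiber into one side of a separation $X=U\sqcup V$, so $f(U)$ and $f(V)$ are disjoint. Closedness and surjectivity of $f$ then make $f(U)$ and $f(V)$ complementary nonempty closed sets, which gives a separation of $Y$.
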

\begin{proof}
Suppose $X=U\sqcup V$ is a separation into nonempty disjoint open sets. Each is also closed. A connected fiber cannot meet both, so $f(U)$ and $f(V)$ are disjoint. They are nonempty closed sets whose union is $Y$, since $f$ is closed and surjective. Each is the complement of the other and hence open, contradicting connectedness of $Y$.
\end{proof}

For an integer $k$ define the space of boundary data
\[
 \B_k=\{(C_1,\ldots,C_n)\in\E_p^n:
                \sum_{j=1}^n\tr\Theta(C_j)=2\pi k\}.
\]
In logarithmic coordinates this is
\begin{equation}\label{eq:convex-base}
\{(\Theta_1,\ldots,\Theta_n)\in\mathcal D_p^n:
                         \sum_{j=1}^n\tr\Theta_j=2\pi k\},
\end{equation}
an intersection of a convex set with an affine hyperplane. It is nonempty precisely when $1\leq k\leq np-1$. Necessity was proved in Lemma~\ref{lem:discrete}; for sufficiency take
\begin{equation}\label{eq:central-boundary}
 \Theta_1=\cdots=\Theta_n=\frac{2\pi k}{np}\Id.
\end{equation}
Thus every nonempty $\B_k$ is contractible.

Let $\Srep_k=\{\phi\in\Srep_{g,n}(p):k(\phi)=k\}$, with $g\geq1$. Restricting a representation to its boundary gives
\[
 \beta:\Srep_k\longrightarrow\B_k.
\]
The product $\prod_{i=1}^nC_i$ has determinant one on $\B_k$, and the fiber over $(C_1,\ldots,C_n)$ is
\begin{equation}\label{eq:boundary-fiber}
 \mu_g^{-1}((C_1\cdots C_n)^{-1}).
\end{equation}
Theorem~\ref{thm:commutator} says exactly that this fiber is nonempty and connected.

The map $\beta$ is also closed. In fact $\Srep_k$ is the closed subset
\[
 \{(\mathbf A,\mathbf B,\mathbf C)\in\U(p)^{2g}\times\B_k:
                      \mu_g(\mathbf A,\mathbf B)C_1\cdots C_n=\Id\}
\]
of a product with a compact first factor. Projection from such a product is closed.  Equivalently, $\beta$ is proper, since its preimage of a compact subset of $\B_k$ is closed in a compact product.

Lemma~\ref{lem:closed-map} now shows that $\Srep_k$ is connected. To conclude path connectedness, eliminate $C_n$ from the presentation. For $n\geq1$ this identifies the full representation space with
$\U(p)^{2g+n-1}$. The conditions that $C_1,\ldots,C_{n-1}$ and
\[
 C_n=\left(\prod_{r=1}^g[A_r,B_r]C_1\cdots C_{n-1}\right)^{-1}
\]
avoid $1$ are open conditions. Therefore $\Srep_{g,n}(p)$ is a smooth open submanifold of real dimension $(2g+n-1)p^2$, whenever it is nonempty. Its locally constant $k$-levels are open submanifolds, and hence locally path-connected. A connected locally path-connected space is path-connected. This proves the positive-genus part of Theorem~\ref{thm:main}.

\subsection{The quotient and the closed-surface case}

\begin{proof}[Completion of the proof of Theorem~\ref{thm:main}]
Let
$q:\Srep_{g,n}(p)\to\Srep_{g,n}(p)/\U(p)$
be the quotient map. Since $k$ is unchanged under unitary
conjugation, it defines a function on the quotient by
$\bar k([\phi])=k(\phi)$. This function is integer-valued
and continuous by the definition of the quotient topology.

Fix an integer $a$ for which $\bar k^{-1}(a)$ is nonempty.
Any two points of this level set have representatives in
$k^{-1}(a)$. By the preceding argument, $k^{-1}(a)$ is path
connected, so these representatives can be joined by a path
within that level set. Applying $q$ to this path gives a path
between the two quotient points. Thus every nonempty level set
of $\bar k$ is path connected.

On the other hand, a continuous integer-valued function is
constant on every connected set. Hence points with different
values of $\bar k$ cannot belong to the same connected component.
It follows that the nonempty level sets of $\bar k$ are precisely
the connected components of the quotient. The quotient map
therefore gives a natural bijection between the connected
components of the representation space and those of the quotient,
with corresponding components having the same value of $k$.

For completeness, if $n=0$, there is no boundary condition. For $g=0$ the representation space is a point. For $g\geq1$ it is $\mu_g^{-1}(\Id)$ and is connected by Theorem~\ref{thm:commutator}. Its quotient is connected as well. In particular the $n=0$ case has one component and should not be obtained by substituting $n=0$ into the positive-boundary formula.
\end{proof}

\section{Signatures and applications}\label{sec:applications}

\subsection{The Hermitian intersection form}

Let $\mathcal E_\phi$ be the rank-$p$ flat bundle associated with a unitary representation, with its parallel positive Hermitian form $\Omega$. We consider the first cohomology 
\[
 \widehat H^1(\Sigma;\mathcal E_\phi)
 =\im\bigl(H^1(\Sigma,\partial\Sigma;\mathcal E_\phi)
                 \longrightarrow H^1(\Sigma;\mathcal E_\phi)\bigr).
\]
Cup product, the pairing $\Omega$, and evaluation on the relative fundamental class give a nondegenerate skew-Hermitian form $Q$ on this image. With the convention of \cite[Section~2.1]{KPW2022}, the signature is the signature of the Hermitian form $iQ$. More explicitly, if $u$ is represented by a relative lift $\widetilde u$ and $v$ by an absolute class, the pairing is
\[
 Q(u,v)=\langle\Omega(\widetilde u\smile v),[\Sigma,\partial\Sigma]\rangle.
\]
This is independent of the relative lift on the image, by the long exact sequence and Poincar\'e--Lefschetz duality.

\begin{proposition}\label{prop:signature}
For $\phi\in\Srep_{g,n}(p)$ with $n\geq1$,
\begin{equation}\label{eq:signature}
 \sign(\phi)=\boldsymbol{\rho}(\phi)=np-2k(\phi).
\end{equation}
In particular, the signature is a complete invariant of the components of both $\Srep_{g,n}(p)$ and its conjugation quotient.
\end{proposition}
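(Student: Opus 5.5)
The plan is to take the Kim--Pansu--Wan signature formula \cite[Theorem~1]{KPW2022} as the main input:
\[
 \sign(\phi)=-2T(\Sigma,\phi)+\sum_{c\subset\partial\Sigma}\rho\bigl(\phi(c)\bigr).
\]
The first step is to note that for a representation into the compact group $\U(p)$ the Toledo invariant vanishes \cite[Section~6.2]{KPW2022}. The reason is that the associated symmetric space is a point, so the Kähler class pulls back to zero. This leaves $\sign(\phi)=\sum_j\rho(\phi(c_j))$.

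The second step is to check that the boundary contribution in \cite{KPW2022} agrees with our $\rho$ from \eqref{eq:intro-rho}. This is where I expect the only real care to be needed. The general rho invariant in \cite{KPW2022} is defined for all boundary holonomies, and it has jumps at eigenvalue~$1$ coming from the parallel sections along the boundary. On $\E_p$ there is no $1$-eigenspace, so these correction terms disappear. What remains is the eigenangle sum with angles normalized in $(0,2\pi)$, which is exactly $p-\pi^{-1}\tr\Theta(A)$ as in \eqref{eq:rho-log}.

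Two sign conventions have to be matched. One is the orientation of the boundary loops, which we take to be the induced boundary orientation. The other is the convention that the signature is that of $iQ$ rather than $-iQ$. Both must agree with \cite[Section~2.1]{KPW2022}. As a consistency check, I would use the pair of pants. There Proposition~\ref{prop:pair} gives $\boldsymbol{\rho}=\Delta_2=\sign(h(A)+h(B))$, and the extreme components should have definite intersection form of dimension $p$ with the predicted sign. This confirms the normalization without recomputing the cohomology.

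Once $\sign(\phi)=\boldsymbol{\rho}(\phi)$ is established, the identity $\boldsymbol{\rho}=np-2k$ is just the definition \eqref{eq:R-k-intro}.

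For the final assertion, the map $k\mapsto np-2k$ is injective. Hence the signature and $k$ induce the same partition of $\Srep_{g,n}(p)$. By Theorem~\ref{thm:main}, the nonempty fibers of $k$ are exactly the connected components, both for $\Srep_{g,n}(p)$ and for $\Srep_{g,n}(p)/\U(p)$. The signature is conjugation invariant, since $\rho$ is, so it descends to the quotient. Therefore it is a complete invariant of components in both spaces.
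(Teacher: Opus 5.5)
Your proposal is correct and follows the same route as the paper. You specialize the Kim--Pansu--Wan formula to $\U(p)$, where $T=0$ and the boundary term on $\E_p$ is the eigenangle sum $p-\pi^{-1}\tr\Theta$ with no eigenvalue-$1$ correction; then $\boldsymbol{\rho}=np-2k$ by definition, and completeness follows from Theorem~\ref{thm:main}. One small caveat: your pair-of-pants ``consistency check'' carries no weight in the argument and cannot by itself fix the sign convention, because Proposition~\ref{prop:pair} computes the signature of the matrix $h(A)+h(B)$, not of the cohomological form $iQ$. Like the paper, you ultimately rely on the normalization in \cite{KPW2022}.
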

\begin{proof}
The compact specialization of the signature formula \cite[Theorem~4, Section~6.2, equation~(6.7)]{KPW2022} gives $T(\phi)=0$ and
\[
 \sign(\phi)=\sum_{j=1}^n\sum_{\ell=1}^p
                  \left(1-\frac{\theta_{j\ell}}\pi\right)
\]
when every $\theta_{j\ell}$ belongs to $(0,2\pi)$. This is \eqref{eq:signature}; completeness follows from Theorem~\ref{thm:main}.
\end{proof}

\begin{proposition}\label{prop:inertia}
For every representation in $\Srep_{g,n}(p)$, $n\geq1$, the natural map from relative to absolute first cohomology is an isomorphism and
\begin{align}
 \dim_\C H^1(\Sigma;\mathcal E_\phi)&=p(2g+n-2),\label{eq:cohom-dim}\\
 b_+(\phi)&=p(g+n-1)-k(\phi),\label{eq:bplus}\\
 b_-(\phi)&=p(g-1)+k(\phi).\label{eq:bminus}
\end{align}
Here $b_\pm$ are the positive and negative indices of $iQ$.
\end{proposition}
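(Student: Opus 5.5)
The plan is to compute the twisted cohomology through the long exact sequence of the pair $(\Sigma,\partial\Sigma)$, then read off $b_\pm$ from the dimension and Proposition~\ref{prop:signature}.

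\emph{Step 1: boundary acyclicity.} For each boundary circle $c_j$, the restricted local system has holonomy $\phi(c_j)\in\E_p$. The twisted cohomology of a circle with holonomy $A$ is
\[
H^0=\ker(A-\Id),\qquad H^1=\operatorname{coker}(A-\Id).
\]
Both groups vanish because $A-\Id$ is invertible. Hence $H^*(\partial\Sigma;\mathcal E_\phi)=0$.

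\emph{Step 2: relative equals absolute.} In the long exact sequence
\[
H^0(\partial\Sigma)\to H^1(\Sigma,\partial\Sigma)\to H^1(\Sigma)\to H^1(\partial\Sigma),
\]
the two outer terms vanish by Step~1. So the natural map $H^1(\Sigma,\partial\Sigma;\mathcal E_\phi)\to H^1(\Sigma;\mathcal E_\phi)$ is an isomorphism. Consequently $\widehat H^1=H^1(\Sigma;\mathcal E_\phi)$, and $iQ$ is a nondegenerate Hermitian form on the whole first cohomology.

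\emph{Step 3: the dimension.} Since $n\geq1$, the surface $\Sigma$ is homotopy equivalent to a graph, so $H^2(\Sigma;\mathcal E_\phi)=0$. Also $H^0(\Sigma;\mathcal E_\phi)$ consists of the invariant vectors of $\phi$. Any invariant vector is fixed by each $\phi(c_j)$, which has no eigenvalue $1$, so $H^0=0$. The Euler characteristic of the local system is $p\chi(\Sigma)$, which gives
\[
\dim_\C H^1(\Sigma;\mathcal E_\phi)=-p\chi(\Sigma)=p(2g+n-2).
\]
This proves \eqref{eq:cohom-dim}.

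\emph{Step 4: the inertia indices.} Because $iQ$ is nondegenerate on $H^1$, we have $b_++b_-=p(2g+n-2)$. Proposition~\ref{prop:signature} gives $b_+-b_-=np-2k$. Solving these two equations,
\[
b_+=\tfrac12\bigl(p(2g+n-2)+np-2k\bigr)=p(g+n-1)-k,\qquad b_-=p(g-1)+k,
\]
which are \eqref{eq:bplus} and \eqref{eq:bminus}.

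The main point needing care is Step~2: the nondegeneracy of $Q$ on the image is given by the construction, and the argument only has to identify that image with the full $H^1$. Once the boundary terms are shown to vanish, the rest is routine.
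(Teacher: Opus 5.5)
Your proposal is correct and follows the paper's proof essentially step by step. Both arguments use boundary acyclicity from the invertibility of $\phi(c_j)-\Id$, the long exact sequence of $(\Sigma,\partial\Sigma)$ to identify relative with absolute $H^1$, and the vanishing of $H^0$ and $H^2$ together with the Euler characteristic count $-p\chi(\Sigma)=p(2g+n-2)$. They then solve $b_++b_-=p(2g+n-2)$ and $b_+-b_-=np-2k$, the latter from Proposition~\ref{prop:signature}.
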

\begin{proof}
Give each boundary circle $c_j$ a cellular decomposition with
one vertex and one edge.Using the cellular local-coefficient construction
\cite[Section~3.H, pp.~328 and~334]{Hatcher2002},
choose lifts $\widetilde v$ and $\widetilde e$ of the vertex
and the oriented edge, with
$\partial\widetilde e=(t-1)\widetilde v$.
Since the generator $t$ acts on the coefficient space
$\mathbb C^p$ by $C_j$, the induced coboundary map is
$v\mapsto(C_j-\Id)v$.
Thus the cohomology is computed by the two-term complex
$\mathbb C^p\xrightarrow{C_j-\Id}\mathbb C^p$
in degrees zero and one.
Thus $H^0(c_j;\mathcal E_\phi)=\ker(C_j-\Id)$ and
$H^1(c_j;\mathcal E_\phi)=\operatorname{coker}(C_j-\Id)$.
Since $1$ is not an eigenvalue of $C_j$, the map
$C_j-\Id$ is invertible, so both groups vanish.
There are no cochains in higher degrees.
As $\partial\Sigma$ is a finite disjoint union of these circles,
we conclude that $H^*(\partial\Sigma;\mathcal E_\phi)=0$.

The long exact sequence of the pair $(\Sigma,\partial\Sigma)$
therefore gives
\[
 0\longrightarrow H^1(\Sigma,\partial\Sigma;\mathcal E_\phi)
 \longrightarrow H^1(\Sigma;\mathcal E_\phi)
 \longrightarrow 0.
\]
Thus the natural map from relative to absolute first cohomology
is an isomorphism.

We next compute their common dimension. A global flat section
is determined by its value at a basepoint, and this value must
be fixed by every holonomy. In particular, it must lie in
$\ker(C_1-\Id)=0$. Hence $H^0(\Sigma;\mathcal E_\phi)=0$.
Since $\Sigma$ is compact with nonempty boundary, it deformation
retracts onto a finite graph $G$; see
\cite[Example~1B.2, p.~88]{Hatcher2002}.
Cohomology with local coefficients is homotopy invariant and
can be computed using cellular cochains
\cite[Section~3.H, p.~334]{Hatcher2002}.
Since $G$ has no cells of dimension two or higher, we obtain
\[
H^2(\Sigma;\mathcal E_\phi)
\cong H^2(G;\mathcal E_\phi|_G)=0.
\]

For a rank-$p$ local system, each cell contributes a copy of
$\C^p$ to the corresponding cellular cochain group. The alternating
sum of the cochain dimensions is therefore $p\chi(\Sigma)$.
This equals the alternating sum of the cohomology dimensions.
Since only the first cohomology can be nonzero, we obtain
\[
 \dim_{\C}H^1(\Sigma;\mathcal E_\phi)
 =-p\chi(\Sigma)
 =p(2g+n-2).
\]
Together with the relative-to-absolute isomorphism above,
this proves \eqref{eq:cohom-dim}.

Finally, $b_+$ and $b_-$ count the positive and negative
eigenvalues of the Hermitian form $iQ$. Since this form is
nondegenerate, there are no zero eigenvalues, so their sum
equals the dimension of the cohomology space. Their difference
is the signature, which is $np-2k$. Thus
\[
 b_++b_-=p(2g+n-2),\qquad
 b_+-b_-=np-2k.
\]
Adding the equations and dividing by two gives $b_+$;
subtracting the second from the first and dividing by two gives
$b_-$. Hence
\[
 b_+=p(g+n-1)-k,\qquad
 b_-=p(g-1)+k,
\]
which proves \eqref{eq:bplus}--\eqref{eq:bminus}.
\end{proof}

For $g=0$, nonnegativity of the two indices recovers $p\leq k\leq(n-1)p$. When $n\geq3$, the component $k=p$ has positive definite intersection form, and the component $k=(n-1)p$ has negative definite intersection form. These are statements about the twisted cohomological form; no irreducibility or rigidity of the representation is implied. In positive genus, when the space is nonempty, the possible signatures are
\[
 \{np-2,np-4,\ldots,2-np\}.
\]
The parity restriction comes from boundary acyclicity, or directly from \eqref{eq:signature}. It distinguishes this locus from the full unitary representation space considered in \cite{KPW2026}, where eigenvalue $1$ is allowed and intermediate signatures of both parities can occur.

\subsection{Simple representatives of every component}

\begin{proposition}\label{prop:normal-forms}
Assume $n\geq1$. Every component in genus zero contains a representation with diagonal image. Every component in positive genus contains a representation with all boundary holonomies equal to a single scalar matrix and all but one handle pair trivial.
\end{proposition}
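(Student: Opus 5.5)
In genus zero, every component already contains a diagonal representative from the proof of Theorem~\ref{thm:products-intro}. Under the identification \eqref{eq:planar-identification}, $\Srep_{0,n}(p)\cong\X_{p,n-1}$ with $k=p+K$. For each admissible $K\in\{0,\ldots,(n-2)p\}$, choose $\mathbf q\in\{0,\ldots,p\}^{n-2}$ with $\sum_j q_j=K$. The binary tuple \eqref{eq:binary-representatives} with $m=n-1$ lies in $\mathcal R_{\mathbf q}$, and by \eqref{eq:Delta-on-piece} it has $\Delta_{n-1}=(n-2)p-2K$. Setting $C_j=A_j$ for $j\leq n-1$ and $C_n=(A_1\cdots A_{n-1})^{-1}$ gives a representation with diagonal image. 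Here $C_n$ is diagonal, and its eigenvalues lie on the open left semicircle, so $C_n\in\E_p$. The case $n=2$ is just $C_1=-\Id=C_2$, with $k=p$. By Theorem~\ref{thm:main}, components are exactly the $k$-levels, so each component contains such a representative.

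In positive genus, fix $1\leq k\leq np-1$ and take the central boundary data \eqref{eq:central-boundary}: $C_1=\cdots=C_n=\zeta\Id$ with $\zeta=e^{2\pi ik/(np)}$. Since $0<2\pi k/(np)<2\pi$, each $C_j$ lies in $\E_p$, and the point lies in $\B_k$. The product $C_1\cdots C_n=\zeta^n\Id$ has determinant $\zeta^{np}=1$. So $Q=(\zeta^n\Id)^{-1}=\zeta^{-n}\Id$ is a scalar in $\SU(p)$, that is, a $p$th root of unity $\omega=e^{-2\pi ik/p}$ times $\Id$.

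It remains to find one handle pair $(A_1,B_1)$ with $[A_1,B_1]=\omega\Id$, setting the other pairs to $\Id$. I would use the explicit surjectivity construction following Theorem~\ref{thm:commutator}: with $q_\ell=\omega$ for all $\ell$, the condition $\prod q_\ell=\omega^p=1$ holds. Take $D=\diag(1,\omega,\omega^2,\ldots,\omega^{p-1})$ and the cyclic shift $S$, so that $[D,S]=\omega\Id$ up to the index convention. If that convention produces $\omega^{-1}$ instead, swap $D$ and $S$. For $p=1$, $\omega=1$ and $D=S=1$ works. The resulting representation lies in $\Srep_k$, hence in the component labelled $k$ by Theorem~\ref{thm:main}.

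The main obstacle is only bookkeeping. One must check the sign and orientation of the relation $\prod[A_r,B_r]C_1\cdots C_n=1$ so that the commutator equals $(C_1\cdots C_n)^{-1}$ rather than its inverse; either case is handled by swapping $A_1$ and $B_1$. One must also confirm that $k$ is computed correctly at the scalar point: $\sum_j\tr\Theta(C_j)=n\,p\cdot 2\pi k/(np)=2\pi k$.
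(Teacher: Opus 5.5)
Your proof is correct and follows the paper's approach: you exhibit an explicit representative with each value of $k$ and invoke Theorem~\ref{thm:main}, and your positive-genus part (scalar boundary data $e^{2\pi ik/(np)}\Id$ plus the single commutator $[D,S]$) is exactly the paper's. The only difference is in genus zero. There the paper uses the self-contained symmetric choice $C_1=\cdots=C_n=\diag(e^{2\pi i d_1/n},\ldots,e^{2\pi i d_p/n})$ with $d_\ell\in\{1,\ldots,n-1\}$ and $\sum_\ell d_\ell=k$, which it later reuses in Section~\ref{sec:classical} independently of Section~\ref{sec:products}, whereas you reuse the binary representatives and their $\Delta$-computation from Section~\ref{sec:products}.
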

\begin{proof}
In genus zero, let $p\leq k\leq(n-1)p$. Choose integers $d_1,\ldots,d_p\in\{1,\ldots,n-1\}$ with $\sum_\ell d_\ell=k$. Such a choice exists by distributing $k-p$ units among $p$ entries, each initially equal to one and each having capacity $n-2$. Set
\[
 C_1=\cdots=C_n=
 \diag(e^{2\pi i d_1/n},\ldots,e^{2\pi i d_p/n}).
\]
Each entry avoids $1$, the product of the $n$ matrices is $\Id$, and the index is $k$. By Theorem~\ref{thm:main}, every representation in this component can be joined to this one.

In positive genus, put $\lambda=e^{2\pi i k/(np)}$ and prescribe $C_j=\lambda\Id$ for every $j$. Their product is $e^{2\pi i k/p}\Id$, whose determinant is one. The explicit commutator construction after Theorem~\ref{thm:commutator} realizes its inverse using a single handle. Take all other handle matrices to be $\Id$. Again Theorem~\ref{thm:main} provides the deformation from any point in the component to this representative.
\end{proof}

The second assertion does not imply that every connected component
contains a representation with abelian image. For example, on a once-punctured surface every representation with abelian image has trivial boundary holonomy, so no such representation belongs to $\Srep_{g,1}(p)$. The noncommuting handle matrices in Proposition~\ref{prop:normal-forms} are necessary in that case.

\subsection{Irreducible representations and the smooth quotient}

A unitary representation is called irreducible if it preserves no nonzero proper complex subspace of $\C^p$. Write $\Srep_{g,n}^{\mathrm{irr}}(p)$ for the irreducible locus.

\begin{proposition}\label{prop:irreducible}
Assume $p\geq2$, $n\geq1$, and $d=2g+n-1\geq2$. Every component of $\Srep_{g,n}(p)$ contains irreducible representations, and its irreducible locus is path-connected. Thus $k$ gives the same component classification for $\Srep_{g,n}^{\mathrm{irr}}(p)$ and its conjugation quotient. The latter is a smooth manifold of real dimension
\[
 (d-1)p^2+1=(2g+n-2)p^2+1.
\]
\end{proposition}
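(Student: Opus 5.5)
The plan is to show that the reducible locus has high codimension inside each component. Then, since each component is a connected manifold by Theorem~\ref{thm:main}, removing the reducible locus leaves it connected and nonempty.

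First, eliminate $C_n$ to identify $\Srep_{g,n}(p)$ with an open subset $W\subset\U(p)^{d}$, where $d=2g+n-1$ and the coordinates are the free generators $A_1,B_1,\ldots,A_g,B_g,C_1,\ldots,C_{n-1}$. Each $k$-component $W_k$ is an open, connected, path connected manifold of real dimension $dp^2$.

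Next, bound the dimension of the reducible locus. A reducible tuple preserves a proper subspace $V$ with $\dim V=a$, $1\leq a\leq p-1$. Because the representation is unitary, it also preserves $V^\perp$. The tuples preserving a fixed $V$ form $(\U(a)\times\U(p-a))^{d}$. Sweeping over $V\in\Gr_a(\C^p)$, the set of reducible tuples is the image of a compact manifold of dimension
\[
 2a(p-a)+d\bigl(a^2+(p-a)^2\bigr).
\]
Its codimension in $\U(p)^d$ is therefore at least $(d-1)\cdot 2a(p-a)\geq 2(d-1)(p-1)\geq 2$ whenever $d\geq2$ and $p\geq2$. The reducible locus is a finite union of images of smooth maps from such manifolds, hence a closed subset. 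I would like codimension at least $2$ so that complements stay connected: a path in $W_k$ between two irreducible points can be perturbed transversally to avoid the images of these maps. The point to watch is the boundary case $d=2$, $p=2$, $a=1$, where the codimension is exactly $2$. This is still sufficient, since a path (dimension $1$) generically misses a subset of codimension $2$ in a manifold, by a standard transversality argument applied to smooth maps from compact manifolds. This gives path connectedness of $W_k^{\mathrm{irr}}$, provided it is nonempty. Nonemptiness also follows, because a closed set of codimension at least $2$ cannot contain the open set $W_k$.

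For the quotient, on irreducibles the stabilizer under conjugation is exactly the center $\U(1)$. So $\mathrm{PU}(p)$ acts freely on $W^{\mathrm{irr}}$, and properly since the group is compact. The slice theorem then gives a smooth manifold of dimension $dp^2-(p^2-1)=(d-1)p^2+1$. Its components correspond to those of $W^{\mathrm{irr}}$ by the argument already used in the completion of the proof of Theorem~\ref{thm:main}, because $k$ is conjugation invariant and the quotient map is surjective and continuous.

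The main obstacle is making the transversality argument rigorous. Concretely, I would show that the reducible set is contained in a finite union of images of smooth maps $\Gr_a(\C^p)\times(\U(a)\times\U(p-a))^d\to\U(p)^d$ whose source dimension is at most $dp^2-2$. A complement of such a set in a connected manifold is connected, by perturbing a smooth path to be transverse to each map (parametric transversality), which forces it to miss the image since $1+\dim(\text{source})<dp^2$.
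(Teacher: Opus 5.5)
Your proposal is correct and follows the paper's proof essentially step for step. Both use the same incidence-space count, giving codimension $2(d-1)a(p-a)\geq2$ for the reducible locus, a transversality argument showing that a generic path in a component misses that locus, and Schur's lemma together with the quotient manifold theorem for the free, proper $\mathrm{PU}(p)$-action. The one variation is that you make the path transverse to the smooth maps from the incidence manifolds $\{(V,\mathbf A):\mathbf A\text{ preserves }V\}$ into $\U(p)^d$ directly, whereas the paper uses Tarski--Seidenberg and a Whitney stratification of their semialgebraic images. Your route is slightly more economical. Note, however, that the source must be that bundle over $\Gr_a(\C^p)$, or finitely many local trivializations of it. It is not the global product $\Gr_a(\C^p)\times(\U(a)\times\U(p-a))^d$ written in your last paragraph, which has no canonical map to $\U(p)^d$.
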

\begin{proof}
After eliminating the final boundary generator, a representation
is determined by $d=2g+n-1$ unitary matrices. Thus the ambient
representation space is $M=\U(p)^d$, of real dimension $dp^2$.
The boundary conditions define an open subset
$\Srep_{g,n}(p)\subset M$.

We first estimate the dimension of the reducible locus.
A tuple $(A_1,\ldots,A_d)$ is reducible precisely when all its
matrices preserve a common subspace
$V\subset\C^p$ of dimension $\ell$, for some
$1\leq\ell\leq p-1$. Since the matrices are unitary, they also
preserve $V^\perp$. Relative to the decomposition
$\C^p=V\oplus V^\perp$, each matrix is therefore block diagonal,
with blocks in $\U(\ell)$ and $\U(p-\ell)$.

For a fixed $\ell$, let $\mathcal I_\ell$ be the space of pairs
consisting of an $\ell$-plane $V$ and a tuple preserving $V$.
The choice of $V$ is parametrized by $\Gr_\ell(\C^p)$.
Once $V$ is fixed, the possible tuples form
$(\U(\ell)\times\U(p-\ell))^d$.
Thus $\mathcal I_\ell$ is a compact smooth fiber bundle, and
\[
 \dim_{\mathbb R}\mathcal I_\ell
 =2\ell(p-\ell)+d\bigl(\ell^2+(p-\ell)^2\bigr)
 =dp^2-2(d-1)\ell(p-\ell).
\]

Let $Z_\ell\subset M$ be the image obtained by forgetting $V$.
It consists of the tuples having a common invariant
$\ell$-plane. Since $\mathcal I_\ell$ is compact, $Z_\ell$ is
compact and hence closed.

We also need that $Z_\ell$ is semialgebraic. An $\ell$-plane
can be represented by its orthogonal projection $P$, which
satisfies
$
 P^2=P,$ $P^*=P$, $\tr P=\ell.
$
A unitary matrix $A_j$ preserves this plane if and only if
$PA_j=A_jP$. These conditions are polynomial equations in
the real and imaginary parts of the matrix entries.
The Tarski--Seidenberg theorem therefore shows that forgetting
$P$ gives a semialgebraic set $Z_\ell$; see \cite{BCR}.
Moreover, a semialgebraic projection cannot increase dimension,
so $\dim_{\mathbb R}Z_\ell\leq\dim_{\mathbb R}\mathcal I_\ell$.

The full reducible locus is the finite union
$Z=\bigcup_{\ell=1}^{p-1}Z_\ell$.
Since $d\geq2$ and $\ell(p-\ell)\geq1$, the dimension estimate
above gives
$
 \dim_{\mathbb R}Z\leq dp^2-2.
$
Thus $Z$ is closed and semialgebraic, with real codimension
at least two in $M$.

Now let $W$ be a connected component of $\Srep_{g,n}(p)$.
It is an open connected manifold, and hence is path connected.
Since $Z$ has dimension strictly smaller than $\dim W=dp^2$,
it cannot contain the open set $W$. Therefore
$W\setminus Z$ is nonempty.

We show that any two points of $W\setminus Z$ can be joined
without passing through $Z$. Choose a path between them in $W$,
and approximate it by a smooth path that is constant near its
endpoints. Since $Z$ is semialgebraic, it admits a finite smooth
Whitney stratification; see \cite{BCR}. In particular, it is a
finite union of smooth submanifolds, called strata, each of
codimension at least two in $M$.

By relative transversality, we can perturb the path arbitrarily
slightly, keeping it fixed near its endpoints, so that it is
transverse to every stratum; see \cite[Chapter~3]{Hirsch}.
The perturbed path can be kept inside $W$, because the original
path has compact image contained in this open set.

Such a transverse path cannot meet any stratum. Indeed, if
$\gamma(t)$ belonged to a stratum $S$, transversality would require
$
 d\gamma_t(T_t[0,1])+T_{\gamma(t)}S
 =T_{\gamma(t)}M.
$
The tangent space of the path contributes at most one dimension,
whereas $S$ has codimension at least two. The left-hand side
therefore has dimension at most $\dim M-1$, a contradiction.
Hence the perturbed path avoids $Z$ entirely. This proves that
$W\setminus Z$ is path connected.

It remains to describe the quotient of the irreducible locus.
By Schur's lemma, a unitary matrix commuting with an irreducible
tuple must be scalar. Thus the stabilizer of every irreducible
tuple in $\U(p)$ is exactly the scalar subgroup $\U(1)$.
After dividing out this subgroup, the induced action of
$\mathrm{PU}(p)=\U(p)/\U(1)$ is free. It is also proper, since
$\mathrm{PU}(p)$ is compact.

The irreducible locus $\Srep_{g,n}^{\mathrm{irr}}(p)$ is open
in $M$, because $Z$ is closed. By the quotient manifold theorem
\cite[Theorem~21.10]{Lee2013}, the smooth, free and proper
action of $\mathrm{PU}(p)$ on
$\Srep_{g,n}^{\mathrm{irr}}(p)$ gives a smooth quotient
of real dimension 
\[
 dp^2-(p^2-1)
 =(d-1)p^2+1
 =(2g+n-2)p^2+1.
\]
The $\mathrm{PU}(p)$-orbits are the same as the $\U(p)$-orbits,
so this is the desired conjugation quotient.

Finally, each nonempty $k$-level in the full representation
space is a component $W$. We have proved that its irreducible
part $W\setminus Z$ is nonempty and path connected.
Its image in the quotient is therefore also path connected.
Since $k$ descends to a continuous integer-valued function,
different values cannot lie in the same connected component.
Thus $k$ gives the asserted component classification for both
the irreducible locus and its quotient.
\end{proof}

The assumption $d=2g+n-1\geq2$ is essential here: the fundamental
group must have at least two free generators. For an annulus,
the fundamental group is $\mathbb Z$, so a representation is
determined by a single unitary matrix. Every eigenline of this
matrix is invariant under the entire representation. Thus, when
$p>1$, every representation of the annulus is reducible.

When $p=1$, on the other hand, every representation is irreducible,
since $\mathbb C$ has no nonzero proper subspaces.

For a sphere with $n\geq3$ punctures, the fundamental group is free
of rank $n-1\geq2$. Hence, when $p\geq2$,
Proposition~\ref{prop:irreducible} applies and shows that every
nonempty connected component of $\Srep_{0,n}(p)$ contains
irreducible representations. 

\subsection{Prescribed boundary conjugacy classes}

Suppose $g\geq1$ and $n\geq1$, and fix conjugacy classes
$\mathcal C_1,\ldots,\mathcal C_n\subset\U(p)$.
Let $\mathcal R$ be the space of representations
$\phi:\pi_1(\Sigma_{g,n})\to\U(p)$ satisfying
$\phi(c_j)\in\mathcal C_j$ for every boundary generator $c_j$.
Since conjugate matrices have the same determinant, each class
$\mathcal C_j$ has a well-defined determinant
$\delta_j\in\U(1)$.

We claim that $\mathcal R$ is nonempty if and only if
$\delta_1\cdots\delta_n=1$, and that it is connected whenever
it is nonempty.

To see the necessity of the determinant condition, write
$A_i=\phi(a_i)$, $B_i=\phi(b_i)$, and $C_j=\phi(c_j)$.
Every commutator has determinant one. Taking determinants
therefore gives $\delta_1\cdots\delta_n=1$.

Conversely, suppose this condition holds, and choose any
$C_j\in\mathcal C_j$. Then $(C_1\cdots C_n)^{-1}$ belongs to
$\mathrm{SU}(p)$. The commutator map
\[
 \mu_g:\U(p)^{2g}\longrightarrow\mathrm{SU}(p),
 \quad
 (A_1,B_1,\ldots,A_g,B_g)
 \longmapsto [A_1,B_1]\cdots[A_g,B_g]
\]
is surjective by Theorem~\ref{thm:commutator}.
Hence we can choose the matrices $A_i,B_i$ so that the surface
relation holds. This proves that $\mathcal R$ is nonempty.

We now prove connectedness. Consider the map that records only
the boundary matrices:
\[
 \pi:\mathcal R\longrightarrow
 \mathcal C_1\times\cdots\times\mathcal C_n,
 \qquad
 \phi\longmapsto
 \bigl(\phi(c_1),\ldots,\phi(c_n)\bigr).
\]
The preceding argument shows that $\pi$ is surjective.
For a fixed boundary tuple $(C_1,\ldots,C_n)$, the remaining
choices are precisely the matrices $A_i,B_i$ satisfying
$\mu_g(A_1,B_1,\ldots,A_g,B_g)=(C_1\cdots C_n)^{-1}$.
Thus the fiber of $\pi$ is naturally identified with
$
 \mu_g^{-1}\bigl((C_1\cdots C_n)^{-1}\bigr),
$
which is connected by Theorem~\ref{thm:commutator}.

Each conjugacy class $\mathcal C_j$ is compact and connected,
since it is the continuous image of $\U(p)$ under conjugation
of a fixed matrix. Their product is therefore compact and
connected. Moreover, $\mathcal R$ is a closed subset of the
compact space
$\U(p)^{2g}\times\mathcal C_1\times\cdots\times\mathcal C_n$,
so it is compact. Consequently, $\pi$ is a closed map.
Lemma~\ref{lem:closed-map} now applies: a closed continuous
surjection with connected fibers onto a connected space has
a connected domain. Hence $\mathcal R$ is connected.

This is a special case of the more general theory of
representations with prescribed boundary conjugacy classes
in \cite{HL2005}. Here the boundary matrices are allowed to
have eigenvalue $1$.

\section{A second route in genus zero}\label{sec:classical}

We give another proof of the genus-zero classification using
classical results on the eigenvalues of products of unitary
matrices. The argument has three steps: we show that the possible
boundary spectra form a convex set, that the representations
with any fixed boundary spectra form a connected space, and that
the spectral map is closed. Together, these facts imply
connectedness of each nonempty level of the invariant $k$.

Assume $n\geq3$. In genus zero, a representation is determined
by its boundary matrices $C_1,\ldots,C_n\in\U(p)$, which satisfy
$C_1\cdots C_n=\Id$. Since no $C_j$ has eigenvalue $1$, its
eigenvalues can be written uniquely, in increasing order of
their normalized arguments, as
\[
 e^{2\pi i\alpha_{j1}},\ldots,e^{2\pi i\alpha_{jp}},
 \qquad
 0<\alpha_{j1}\leq\cdots\leq\alpha_{jp}<1.
\]
Taking determinants in the product relation shows that
$\sum_{j=1}^n\sum_{\ell=1}^p\alpha_{j\ell}$ is an integer.
We fix this integer and denote it by $k$.

Let $\mathcal P_k$ be the set of eigenangle arrays
$(\alpha_{j\ell})$ that occur for such representations with
total sum $k$. Thus $\mathcal P_k$ records the possible boundary
spectra, without recording the matrices themselves.
The spectral map is the continuous surjection
\[
 \sigma:\Srep_{0,n}(p)_k\longrightarrow\mathcal P_k,
 \qquad
 (C_1,\ldots,C_n)\longmapsto(\alpha_{j\ell}).
\]

We first show that $\mathcal P_k$ is convex.
The multiplicative eigenvalue theorem, in the index formulation
of \cite[Theorem~2.1]{FW2006}, characterizes the realizable
arrays by finitely many affine inequalities in the eigenangles.
The ordering conditions and the equation
$\sum_{j,\ell}\alpha_{j\ell}=k$ are also linear.
Consequently, a line segment joining two points of
$\mathcal P_k$ remains in $\mathcal P_k$.
The strict endpoint conditions $0<\alpha_{j\ell}<1$ are preserved
along such a segment, and repeated eigenvalues are allowed.
In particular, $\mathcal P_k$ is connected whenever it is
nonempty. Restricting the angles to $(0,1)$ also avoids the
ambiguity that angles $0$ and $1$ represent the same eigenvalue.

We next show that every fiber of $\sigma$ is connected.
Fix an eigenangle array in $\mathcal P_k$.
For unitary matrices, fixing all eigenvalues, including their
multiplicities, is equivalent to fixing a conjugacy class.
Thus the fiber consists of tuples in prescribed conjugacy
classes whose product is $\Id$.

To apply the connectedness theorem of
Alekseev--Malkin--Meinrenken, we pass from $\U(p)$ to $\SU(p)$.
Set $s_j=\sum_{\ell=1}^p\alpha_{j\ell}$ and define
$
 D_j=e^{-2\pi i s_j/p}C_j.
$
Then
$\det D_j=e^{-2\pi i s_j}\det C_j=1$, so $D_j\in\SU(p)$.
For the fixed eigenangle array, this scalar multiplication
identifies the prescribed conjugacy class of $C_j$ with a
conjugacy class $\mathcal D_j$ in $\SU(p)$.
Here conjugacy under $\U(p)$ agrees with conjugacy under
$\SU(p)$: a unitary conjugating matrix can be multiplied by a
scalar to make its determinant one, without changing its
conjugation action.

Since $\sum_j s_j=k$, the original product relation becomes
\[
 D_1\cdots D_n=e^{-2\pi i k/p}\Id.
\]
The matrix on the right belongs to the center of $\SU(p)$,
because $k$ is an integer.
Hence the fiber of $\sigma$ is homeomorphic to a fiber of
the multiplication map
\[
 m:\mathcal D_1\times\cdots\times\mathcal D_n
 \longrightarrow\SU(p),
 \qquad
 (D_1,\ldots,D_n)\longmapsto D_1\cdots D_n.
\]

For $p\geq2$, the group $\SU(p)$ is compact, connected, and
simply connected. Each $\mathcal D_j$ is a compact connected
conjugacy class. The fusion construction equips their product
with a quasi-Hamiltonian structure whose group-valued moment
map is precisely the multiplication map $m$.
By \cite[Theorem~7.2]{AMM1998}, every fiber of $m$ is connected.
This proves connectedness of the fibers of $\sigma$.
When $p=1$, fixing the eigenangles fixes all the boundary
matrices, so each nonempty fiber consists of a single point.

It remains to show that $\sigma$ is closed.
We do this by proving that it is proper, meaning that the
inverse image of every compact subset of $\mathcal P_k$
is compact.
Let $K\subset\mathcal P_k$ be compact.
Since all its eigenangles lie strictly between $0$ and $1$,
there is an $\varepsilon>0$ such that
\[
 \varepsilon\leq\alpha_{j\ell}\leq1-\varepsilon
 \qquad
 \text{for every }(\alpha_{j\ell})\in K.
\]
Now take a sequence in $\sigma^{-1}(K)$.
Compactness of $\U(p)^n$ gives a subsequence converging to a
unitary tuple $(C_1,\ldots,C_n)$.
The product relation passes to the limit.
The uniform bound on the eigenangles ensures that no limiting
matrix acquires eigenvalue $1$.
Moreover, the ordered eigenangles vary continuously on this
region, so the limiting eigenangle array still belongs to $K$.
Thus the limit lies in $\sigma^{-1}(K)$, proving compactness.

Both the domain and the target are locally compact metrizable
spaces, so properness implies that $\sigma$ is closed.
We have therefore obtained a closed continuous surjection
onto a connected space, with connected fibers.
Lemma~\ref{lem:closed-map} shows that
$\Srep_{0,n}(p)_k$ is connected whenever it is nonempty.
By the local path connectedness established in
Section~\ref{sec:surfaces}, this level is also path connected.

To determine which levels are nonempty, we use the index
bounds in \cite[Theorem~2.2]{FW2006}.
In our setting, no boundary matrix has eigenvalue $1$.
There is also no trivial summand: a nonzero vector in such a
summand would be fixed by every boundary matrix.
The correction terms in those bounds therefore vanish, giving
$
 p\leq k\leq(n-1)p.
$
The explicit diagonal construction in the proof of
Proposition~\ref{prop:normal-forms} realizes every integer in this interval;
this construction does not use Theorem~\ref{thm:main}.
Since $k$ is continuous and integer-valued, different levels
lie in different connected components.
Thus the nonempty levels are exactly the connected components,
and their number is $(n-2)p+1$.
The annulus is treated separately by
\eqref{eq:planar-identification}.

This gives a proof of the genus-zero classification that uses
established convexity and connectedness theorems, independently
of the explicit paths in Section~\ref{sec:products}.
The matrix proof provides further geometric information:
it describes how the pieces defined by partial products are
joined by paths, including paths along which an intermediate
product has eigenvalue $1$.
It also gives the Grassmannian model in the two-factor case.

There is also an algebro-geometric interpretation.
After choosing a complex structure on the punctured sphere,
the Mehta--Seshadri correspondence relates unitary
representations, up to conjugacy, to polystable parabolic
bundles of parabolic degree zero \cite{MS1980}.
The boundary spectra determine the parabolic weights,
with the precise identification depending on the convention
for local monodromy.
This correspondence underlies the approach to spectral
inequalities in \cite{Belkale2001}.
Our proofs work directly with unitary matrices and allow
the boundary eigenangles to vary throughout $(0,1)$.

\bibliographystyle{amsalpha}
\bibliography{references}
\end{document}